\def\C             {{\ensuremath{\mathbb C}}}
\def\Q             {{\ensuremath{\mathbb Q}}}
\def\Z             {{\ensuremath{\mathbb Z}}}
\def\N             {{\ensuremath{\mathbb N}}}
\def\D             {{\ensuremath{\mathcal D}}}
\def\E             {{\ensuremath{\mathcal E}}}
\def\P             {{\ensuremath{\mathcal P}}}
\def\g             {{\ensuremath{\mathfrak g}}}
\newcommand\car[1] {\langle#1\rangle}
\def\df            {\,{:=}\,}
\def\eps           {\varepsilon}
\newtheorem{thm}{Theorem}
\newtheorem*{Thm}{Theorem}
\newtheorem{lem}[thm]{Lemma}
\theoremstyle{definition}
\newtheorem{expl}[thm]{Example}
\newtheorem{prob}[thm]{Problem}
\newtheorem{defi}[thm]{Definition}
\newtheorem{rem}[thm]{Remark}
\newtheorem{que}[thm]{Question}
\newcommand{\hamburger}[4] 
{
  \thispagestyle{empty}
  \vspace*{-2cm}
  \begin{flushright}
    ZMP-HH / #2 \\
    Hamburger Beitr{\"a}ge zur Mathematik Nr. #3 \\
    #4 \\
  \end{flushright}
  \vspace{0.5cm}
  \begin{center}
    \Large \bf
    #1
  \end{center}
  \vspace{0.5cm}
  \begin{center}	
    Simon Lentner and Daniel Nett \\
    Algebra and Number Theory, 
    Hamburg University,\\
    Bundesstra{\ss}e 55, D-20146 Hamburg \\
  \end{center}
  \vspace{0.5cm}
}
\begin{document}

\numberwithin{equation}{section}
\numberwithin{thm}{section}
                                     
\hamburger{A theorem on roots of unity and a combinatorial principle}%
          {14-20}{526}{September 2014}
%
%

\begin{abstract}
  Given a finite set of roots of unity, we show that all power sums are
  non-negative integers iff the set forms a group under multiplication. The
  main argument is purely combinatorial and states that for an arbitrary finite 
  set system the non-negativity of certain alternating sums is  equivalent to
  the set system being a filter.\\
  As an application we determine all discrete Fourier pairs of $\{0,1\}$-matrices.
  This technical result is an essential step in the classification of $R$-matrices
  of quantum groups.
\end{abstract}
\title{}
\date{}
\maketitle

\tableofcontents

\section{Introduction}
  In this paper we prove the following main theorem: 

  \begin{Thm}[\ref{thm:Roots}]
    Let $U$ be a non-empty finite set of complex roots
    of unity and consider the power sums
      $
        a_k\df\sum_{\zeta\in U} \zeta^{k}
      $.
    Then all $a_k$ are non-negative integers iff $U$ is actually a
    multiplicative group of roots of unity (i.e. all $n$-th roots of unity for
    some $n$).
  \end{Thm}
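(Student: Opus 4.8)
The plan is to prove the substantive implication (``$a_k\ge 0$ for all $k$'' $\Rightarrow$ ``$U$ is a group'') by discrete Fourier analysis on the finite cyclic group $\mu_n$ of $n$-th roots of unity, where $n$ is any common period of the elements of $U$ (say the least common multiple of their orders), so that $U\subseteq\mu_n$. The converse is the elementary computation that $\sum_{\zeta\in\mu_m}\zeta^k$ equals $m$ when $m\mid k$ and $0$ otherwise, which is manifestly a non-negative integer. For the forward direction, observe that $(a_k)$ depends only on $k\bmod n$, and that $a_k=\sum_{\zeta\in\mu_n}\one_U(\zeta)\,\zeta^{k}$ is exactly the Fourier transform of the indicator $\one_U\colon\mu_n\to\{0,1\}$. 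Write $m\df|U|$; the goal is $U=\mu_m$.

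First I would collect three elementary facts (all sums over $k$ below run over $\Z/n\Z$). (i) $1\in U$: otherwise $\sum_{k}a_k=\sum_{\zeta\in U}\sum_{k}\zeta^{k}=0$, which together with $a_k\ge 0$ forces every $a_k=0$, hence $U=\emptyset$; so in fact $\sum_{k}a_k=n$. (ii) The triangle inequality gives $0\le a_k\le a_0=m$ for every $k$. (iii) Parseval's identity, using that the $a_k$ are real, gives $\sum_{k}a_k^{2}=n\,m$. Combining these, $\sum_k a_k(m-a_k)=m\sum_k a_k-\sum_k a_k^{2}=mn-mn=0$ with every term non-negative, so $a_k\in\{0,m\}$ for all $k$. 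Setting $N_0\df\{k\in\Z/n\Z:a_k=m\}$ one gets $0\in N_0$ and $m\cdot|N_0|=n$, so $m\mid n$ and $|N_0|=n/m$.

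For the second half I would use the convolution identity $\sum_{j\in\Z/n\Z}a_ja_{k-j}=n\,a_k$ for all $k$, which is just the Fourier-side incarnation of the idempotency $\one_U^{2}=\one_U$ (pointwise product goes to convolution of transforms). Writing $a=m\,\one_{N_0}$, this identity reads $m^{2}\cdot\#\{j\in N_0:k-j\in N_0\}=nm\,\one_{N_0}(k)$; for $k\notin N_0$ the right-hand side is $0$, so $N_0+N_0\subseteq N_0$. A nonempty finite subset of $\Z/n\Z$ closed under addition is a subgroup, so $N_0$ is the unique subgroup of order $n/m$, namely $N_0=\{k:m\mid k\}$. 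Fourier inversion then gives $\one_U(\zeta)=\tfrac1n\sum_{k}a_k\zeta^{-k}=\tfrac mn\sum_{j=0}^{n/m-1}\zeta^{-jm}$, which equals $1$ when $\zeta^{m}=1$ and $0$ otherwise; hence $U=\{\zeta\in\mu_n:\zeta^{m}=1\}=\mu_m$, a group of roots of unity.

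I expect the step demanding the most thought to be the one where non-negativity is exploited: it is not obvious a priori that mere non-negativity of the $a_k$ (the integrality is never used in this argument) should force the Fourier transform to take only two values, and the mechanism — combining the trivial bound $a_k\le a_0$ with Parseval so that termwise equality is forced — is a small trick one must spot. Everything afterwards is essentially automatic: idempotency of $\one_U$, repackaged as the convolution identity, upgrades ``two-valued'' to ``supported on a subgroup'', and inversion finishes. (One can instead take the combinatorial route hinted at in the abstract: Newton's identities together with Gauss's lemma show $\prod_{\zeta\in U}(x-\zeta)\in\Z[x]$, so $U$ is Galois-stable and $a_k=\sum_{d\in D}c_d(k)$ is a sum of Ramanujan sums over a finite set $D$ of positive integers; by multiplicativity the analysis splits over primes, and for a single prime power one checks that $\sum_{f\in S}c_{p^{f}}(k)\ge 0$ for all $k$ forces $S=\{0,1,\dots,e\}$. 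I would nonetheless present the Fourier argument above, as it is shorter.)
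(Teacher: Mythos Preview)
Your proof is correct and takes a genuinely different route from the paper's.

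The paper first uses integrality: via Newton's identities the polynomial $\prod_{\zeta\in U}(x-\zeta)$ lies in $\Q[x]$, so $U$ is Galois-stable and is encoded by a set $\E$ of divisors of $N$. It then computes $a_c$ explicitly as a sum of Ramanujan-type terms $\varphi(N/d)\mu(N/(N,dc))/\varphi(N/(N,dc))$ and runs an induction over prime divisors: for each $p\mid N$ one forms a reduced system $\E_p\subset\D(N/p)$, shows that non-negativity of the $a_c$ is inherited by $\E_p$, applies the induction hypothesis to conclude each $\E_p$ is a filter, and then pieces the $\E_p$ together (Lemmas~\ref{lem:acp}--\ref{lem:induction}) to force $\E$ itself to be a filter, handling the two exceptional near-filter configurations by explicit calculation (Lemma~\ref{lem:modification}).

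Your argument bypasses all of this structure. You observe that $(a_k)$ is the discrete Fourier transform of $\one_U$ on $\mu_n$, combine the trivial bound $a_k\le a_0=m$ with Parseval and $\sum_k a_k=n$ to force $a_k\in\{0,m\}$, and then read the convolution identity $a*a=n\,a$ (equivalently $\one_U^2=\one_U$) as closure of the support $N_0$ under addition, whence $N_0$ is the subgroup $m\Z/n\Z$ and inversion gives $U=\mu_m$. This is considerably shorter and, notably, never uses integrality of the $a_k$: non-negativity alone suffices, so you have in fact proved a slightly stronger statement than the one in the paper. What the paper's longer approach buys is the standalone combinatorial principle of Theorem~\ref{thm:Filter} on set systems and measures, which is isolated as a result of independent interest and does not seem to fall out of the Fourier argument; your approach trades that byproduct for brevity and a cleaner hypothesis.
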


  The proof of the theorem is combinatorial in nature. Especially if the order
  of all $\zeta$ are squarefree numbers and hence correspond to
  subsets of primes, the statement amounts to the following apparently new
  combinatorial principle, which is interesting in its own right: 

  \begin{Thm}[\ref{thm:Filter}]
    Let $N$ be finite set, $\P(N)$ denote the power set of $N$ and $\E\subset
    \P(N)$. Let $\mu\colon\P(N)\to[0,\infty]$ be a measure on $\P(N)$.  Then the
    following is equivalent:
    \begin{enumerate}[(i)]
      \item $a_{C}:=(-1)^{|N|} \sum_{D\in\E} (-1)^{|C\cup D|}
        e^{\mu(C-D)}\geq 0$ for all $C\subset N$.
      \item $\E=\{D\subset N\mid D\supset A\}$ for some $A\subset N$. Such a set
        $\E$ is called a filter in $N$.
    \end{enumerate}
  \end{Thm}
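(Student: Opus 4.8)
I would begin by rewriting the data. Since $\mu$ is a measure it is additive, hence determined by the numbers $x_i\df e^{\mu(\{i\})}$, and $e^{\mu(C-D)}=\prod_{i\in C\setminus D}x_i$. I take $\E\neq\emptyset$ and $\mu$ strictly positive on singletons, i.e.\ $x_i>1$ for all $i\in N$ (both are genuinely needed: for the zero measure the statement fails, e.g.\ for $N=\{1,2\}$, $\E=\{\emptyset,N\}$ all $a_C\geq0$ although $\E$ is not a filter, so I include these among the hypotheses); an atom of infinite mass is handled as the limit $x_i\to\infty$. Now view $C\mapsto a_C$ as $\mathcal{A}\chi_\E$, where $\chi_\E\in\{0,1\}^{\P(N)}$ is the indicator of $\E$ and $\mathcal{A}$ is the operator on $\R^{\P(N)}$ with matrix $\mathcal{A}(C,D)=(-1)^{|N|}(-1)^{|C\cup D|}\prod_{i\in C\setminus D}x_i$. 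Identifying $\P(N)$ with $\prod_{i\in N}\{0,1\}$, this matrix is a Kronecker product $\mathcal{A}=\bigotimes_{i\in N}k_i$ with $k_i=\left(\begin{smallmatrix}-1&1\\ x_i&1\end{smallmatrix}\right)$ (row/column index recording whether $i\in C$, resp.\ $i\in D$); more generally set $\mathcal{A}_S\df\bigotimes_{i\in S}k_i$ on $\R^{\P(S)}$. A direct check gives $k_i^2=(1+x_i)\,\id$ and $\det k_i=-(1+x_i)\ne0$, so each $\mathcal{A}_S$ is invertible with $\mathcal{A}_S^2=\prod_{i\in S}(1+x_i)\,\id$.

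\smallskip\noindent\textbf{(ii)$\Rightarrow$(i).}
This is a one-line tensor computation. For $\E=\{D:D\supseteq A\}$ we have $\chi_\E=\bigotimes_i v_i$ with $v_i=(0,1)^{\top}$ if $i\in A$ and $v_i=(1,1)^{\top}$ if $i\notin A$, so $a=\mathcal{A}\chi_\E=\bigotimes_i k_iv_i$; since $k_i(0,1)^{\top}=(1,1)^{\top}$ and $k_i(1,1)^{\top}=(0,1+x_i)^{\top}$, the factor coming from $i\in A$ is $1$, and the factor from $i\notin A$ is $0$ if $i\notin C$ and $1+x_i$ if $i\in C$. Hence $a_C=\prod_{i\in N\setminus A}(1+x_i)\ge0$ when $C\supseteq N\setminus A$ and $a_C=0$ otherwise.

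\smallskip\noindent\textbf{(i)$\Rightarrow$(ii).}
I would induct on $|N|$, the case $N=\emptyset$ (forcing $\E=\{\emptyset\}$, the filter $A=\emptyset$) being immediate. For the step fix $n\in N$, put $N'=N\setminus\{n\}$, split $\E=\E_0\sqcup\E_1$ according to whether $n\notin D$ or $n\in D$, and let $\E_1'=\{D\setminus\{n\}:D\in\E_1\}\subseteq\P(N')$. Peeling off the $n$-th tensor factor yields, for $C'\subseteq N'$,
\[
 a_{C'}=c_{C'}-b_{C'},\qquad a_{C'\cup\{n\}}=x_n\,b_{C'}+c_{C'},
\]
with $b\df\mathcal{A}_{N'}\chi_{\E_0}$ and $c\df\mathcal{A}_{N'}\chi_{\E_1'}$, so (i) becomes $c_{C'}\ge b_{C'}$ and $x_nb_{C'}+c_{C'}\ge0$ for all $C'$. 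Adding $x_n$ times the first inequality to the second gives $(1+x_n)c_{C'}\ge0$, so $c\ge0$; if moreover $\E_1'=\emptyset$ then $c=0$, the two inequalities give $b=0$, hence $\chi_{\E_0}=0$ and $\E=\emptyset$, excluded. So $\E_1'\ne\emptyset$, it satisfies (i) over $N'$, and by induction $\E_1'=\{D'\subseteq N':D'\supseteq A'\}$ for some $A'\subseteq N'$. With $Q=N'\setminus A'$ and $P=\prod_{i\in Q}(1+x_i)>0$, the already-proven implication (ii)$\Rightarrow$(i) applied to $\E_1'$ over $N'$ gives $c_{C'}=P$ for $C'\supseteq Q$ and $c_{C'}=0$ otherwise; then $x_nb_{C'}+c_{C'}\ge0$ forces $b_{C'}=0$ for every $C'\not\supseteq Q$. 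Under $\P(N')\cong\P(Q)\times\P(A')$ we have $\mathcal{A}_{N'}=\mathcal{A}_Q\otimes\mathcal{A}_{A'}$, and $\mathcal{A}_Q$ sends the all-ones vector to $P$ times the basis vector at $Q$ (equivalently $\mathcal{A}_Q^{-1}$ sends that basis vector to $\tfrac1P$ times all-ones); hence the vanishing of $b$ off $\{C'\supseteq Q\}$ forces $\chi_{\E_0}(D')$ to depend only on $D'\cap A'$, say $\chi_{\E_0}(D')=\phi(D'\cap A')$ with $\phi\colon\P(A')\to\{0,1\}$, and then $b_{Q\cup R}=P\,(\mathcal{A}_{A'}\phi)(R)$ for $R\subseteq A'$. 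Finally $a_{Q\cup R}=c_{Q\cup R}-b_{Q\cup R}\ge0$ reads $(\mathcal{A}_{A'}\phi)(R)\le1$; for $R=A'$,
\[
 (\mathcal{A}_{A'}\phi)(A')=\sum_{S\subseteq A'}\phi(S)\prod_{i\in A'\setminus S}x_i,
\]
a sum of terms that are $\ge1$, with equality only at $S=A'$ because $x_i>1$. Hence $\phi$ is supported on $\{A'\}$: either $\phi=0$, giving $\E_0=\emptyset$ and $\E=\{D\subseteq N:D\supseteq A'\cup\{n\}\}$, or $\phi$ is the indicator of $\{A'\}$, giving $\E_0=\E_1'$ and $\E=\{D\subseteq N:D\supseteq A'\}$ — a filter in either case, which closes the induction.

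\smallskip\noindent\textbf{Anticipated main obstacle.}
All the substance is in (i)$\Rightarrow$(ii), and within it the hard point is the final stage: after removing a coordinate and identifying $\E_1'$ via induction, one must pin down $\E_0$ to be either empty or all of $\E_1'$. That is exactly where one must exploit both that $\chi_{\E_0}$ is a genuine $\{0,1\}$-vector and the strict inequality $x_i>1$ (the assertion is false for the zero measure). I also expect the tensor bookkeeping — the two displayed recursions, and the implication ``$b$ vanishes off $\{C'\supseteq Q\}$ $\Rightarrow$ $\chi_{\E_0}$ depends only on $D'\cap A'$'' — to require careful verification, even though each individual check is routine.
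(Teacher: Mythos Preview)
Your proof is correct, and the overall strategy---induction on $|N|$ by peeling off one coordinate and showing that the ``reduced'' system $\E_1'$ again satisfies (i)---is the same as the paper's. Your linear combination $(1+x_n)c_{C'}=x_n\,a_{C'}+a_{C'\cup\{n\}}$ is exactly the paper's Lemma~\ref{lem:comb_acp} in disguise, and your explicit values of $a_C$ for a filter reproduce Lemma~\ref{lem:comb_Values-ac}.

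Where you genuinely diverge is in the endgame. The paper applies the induction hypothesis to \emph{every} $\E_p$, $p\in N$, and then pieces the resulting filters together via a separate combinatorial argument (Lemma~\ref{lem:comb_induction}), concluding that $\E$ is a filter up to possibly adding or removing $\varnothing$, after which Lemma~\ref{lem:comb_modification} rules out the non-filter cases. You instead remove a \emph{single} coordinate $n$, use the tensor factorisation $\mathcal{A}_{N'}=\mathcal{A}_Q\otimes\mathcal{A}_{A'}$ together with the invertibility of $\mathcal{A}_Q$ to pin down $\chi_{\E_0}$ as $\mathbf{1}_Q\otimes\phi$, and then finish with the one inequality $(\mathcal{A}_{A'}\phi)(A')\le 1$. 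This is more economical: it avoids both the cross-comparison of the various $\E_p$ and the separate ``modification'' lemma. The price is the tensor bookkeeping, but as you say each step is routine once the Kronecker structure is in place.

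You are also right to flag the hypotheses $\E\neq\varnothing$ and $x_i>1$. The paper's statement omits them, but its own proof uses strict positivity implicitly: in Lemma~\ref{lem:comb_modification}(a), second subcase, the inequality $1-e^{\mu(N-q)}<0$ fails for the zero measure, and your counterexample $N=\{1,2\}$, $\E=\{\varnothing,N\}$ with $\mu\equiv 0$ shows the theorem is literally false without it. Treating an infinite atom as the limit $x_i\to\infty$ is harmless, since all the inequalities you use are closed.
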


  The proof of the main theorem proceeds along these lines with $\mu$ some
  explicit number theoretic function. But to include the non-square-free case
  the set system $\E\subset \P(N)$ is roughly replaced by the set of orders of
  $\zeta\in U$, which is partially ordered via divisibility. It
  would be nice to prove the main theorem even more generally for any 
  partially ordered set. Note that the expressions we calculate remind very
  strongly on partition functions in statistical physics.\\

  We briefly discuss the proof strategy: We perform an
  induction on the greatest common multiple
  $N$ of the orders of the $\zeta\in U$. Given the set of numbers
  $\E=\{N/ord(\zeta)\;|\;\zeta\in U\}$ we define sets $\E_p$ by decreasing the
  power of a prime $p$ in each number and removing non-divisible ones
  (Definition
  \ref{defi:Ep}). In Lemma \ref{lem:acp} we show that the assumption
  of $a_k\geq 0$ for $\E$ implies it also for all $\E_p$. In Lemma
  \ref{lem:induction} we use the induction hypothesis that all $\E_p$ are
  filters to show that $\E_p$ is almost a filter. Since an explicit
  calculation in Lemma \ref{lem:modification} has shown that small
  modifications of a filter usually violate the condition $a_k\geq 0$ for some
  $k$ we see that $\E$ is actually a filter.\\

  \noindent
  As an application we prove the following theorem: 
  \begin{Thm}[\ref{thm:eps-Solutions}]
    All idempotents $\eps/N$ of the group algebra $\C[\Z_N\times\Z_N]$ with
    $\eps_{ij}\in\{0,1\}$, or equivalently all discrete Fourier
    pairs $\eps,\bar\eps$ of $\{0,1\}$-matrices are either
    \begin{equation}
      \eps_{ij} = \delta_{(\frac Nd\mid i)} \delta_{(d\mid j-t\frac i{N/d})},
    \end{equation}
    for a unique $d\mid N$ and $0\leq t\leq d-1$ or
    they are trivial $\eps=\bar\eps=0$. 
  \end{Thm}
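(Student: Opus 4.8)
The plan is to convert everything into convolution on $G:=\Z_N\times\Z_N$ and then reduce to a classification of subgroups. Regard $\eps$ as the element $\sum_{i,j}\eps_{ij}(i,j)$ of the group algebra and write $*$ for its product; then ``$\eps/N$ idempotent'' means $\eps*\eps=N\eps$. Applying the discrete Fourier transform $\hat\eps(a,b)=\sum_{i,j}\eps_{ij}\,\omega^{ai+bj}$ for a fixed primitive $N$-th root of unity $\omega$ (which turns $*$ into pointwise product), this becomes $\hat\eps(a,b)^2=N\,\hat\eps(a,b)$, i.e. $\hat\eps(a,b)\in\{0,N\}$, i.e. $\bar\eps:=\hat\eps/N$ is a $\{0,1\}$-matrix. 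So the equivalence with the Fourier-pair formulation is a formality, and the set of admissible $\eps$ is stable under the Fourier transform.

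Next I would show that for $\eps\neq 0$ the support $H:=\{(i,j)\mid\eps_{ij}=1\}$ is a subgroup of $G$ of order exactly $N$. Since $\eps=\one_H$, the identity $\eps*\eps=N\eps$ says that the number of representations $g=h+h'$ with $h,h'\in H$ equals $N$ for $g\in H$ and $0$ for $g\notin H$. Vanishing off $H$ gives $H+H\subseteq H$; as $G$ is finite this already forces $0\in H$ and closure under negation, so $H$ is a subgroup, and counting over $g\in H$ then yields $|H|=N$. (Alternatively one can deduce this from Theorem~\ref{thm:Roots}: restricting to a cyclic subgroup $C\le G$, the partial power sums $\sum\overline{\chi(g)}^k$ over the Fourier support, with $g$ a generator of $C$, are nonnegative integers, forcing the relevant sets of roots of unity, hence the fibres of $\eps$ along $C$, to be groups; but the representation count is the quickest route.) Conversely, $\one_H*\one_H=N\one_H$ for every subgroup $H$ of order $N$, so each such $H$ really does give an idempotent $\{0,1\}$-matrix.

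It then remains to put the order-$N$ subgroups of $\Z_N\times\Z_N$ into the stated normal form. Projecting onto the first factor, $\pi_1(H)$ is the unique subgroup of $\Z_N$ of some order $d\mid N$, namely $\tfrac Nd\,\Z_N$, while $\ker(\pi_1|_H)=\{0\}\times K$ with $|K|=N/d$, hence $K=d\,\Z_N$. Picking $s_0=(\tfrac Nd,t')\in H$ over a generator of $\pi_1(H)$ gives $H=\{\,k\,s_0+(0,l d)\,\}$; setting $t:=t'\bmod d\in\{0,\dots,d-1\}$ and unwinding the description, membership $(i,j)\in H$ is exactly $\tfrac Nd\mid i$ together with $d\mid j-t\,\tfrac{i}{N/d}$, i.e. the asserted formula. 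Uniqueness is immediate: $d=|\pi_1(H)|$ is an invariant of $H$ and the admissible lifts $t'$ form a single class mod $d$. The excluded case $\eps=0$ gives $\bar\eps=0$.

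I expect the one genuine obstacle to be the middle step, proving that the support is a subgroup, and in particular the choice of weapon there: invoking Theorem~\ref{thm:Roots} is possible but needs care to produce honest \emph{sets} of roots of unity (constant multiplicities along the chosen cyclic subgroups), whereas the representation-counting argument is short and self-contained; the surrounding steps are routine bookkeeping with cyclic groups.
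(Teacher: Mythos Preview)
Your argument is correct and considerably more direct than the paper's. The key observation that $\one_H*\one_H=N\one_H$ forces $H+H\subseteq H$ (hence $H$ a subgroup, in a finite group) and $|H|=N$ is clean and self-contained; the subsequent classification of order-$N$ subgroups of $\Z_N\times\Z_N$ via the projection/kernel description is routine and yields the asserted normal form with the stated uniqueness.

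The paper proceeds quite differently. It computes the row sums $a_k=\sum_j\eps_{kj}$ and recognises them as power sums over the set of roots of unity $\{\xi^i:\bar\eps_{i0}=1\}$; since these are non-negative integers, Theorem~\ref{thm:Roots} forces that set to be a group, giving $\bar\eps_{i0}=\delta_{d\mid i}$. Repeating for columns and for the dual transform produces parameters $d,d',\bar d,\bar d'$ with $N\mid dd'$, and the proof finishes by an induction on $dd'$ using a shift $\eps\mapsto\eps^{[t]}$ to decrease $dd'$ down to the base case $dd'=N$. So the paper's proof genuinely \emph{uses} the main theorem on roots of unity; indeed, Theorem~\ref{thm:eps-Solutions} is presented as an application of it.

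What each approach buys: yours shows that the Fourier-pair classification is an elementary fact about convolution idempotents and subgroup indicators, independent of Theorem~\ref{thm:Roots}; it is shorter and more conceptual, and generalises immediately to arbitrary finite abelian $G$ (an idempotent $\{0,1\}$-valued $\eps/|G|^{1/?}$---more precisely, $\one_H$ with $|H|$ equal to the normalising constant). The paper's route, by contrast, exhibits a nontrivial use of the roots-of-unity theorem and ties the result into the paper's main narrative; the shift-and-induct step also makes the parameter $t$ appear very explicitly. Your parenthetical remark that one \emph{could} invoke Theorem~\ref{thm:Roots} is exactly where the paper's argument lives, though your caveat about needing honest sets (not multisets) of roots of unity is well taken---the paper handles this by working with the first column of $\bar\eps$, which is $\{0,1\}$-valued by hypothesis.
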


  The significance of this technical result is the classification of
  $R$-matrices for quantum groups and hence for constructing certain braided
  categories. Lusztig's ansatz for such $R$-matrices \cite{Lus93} Sec. 32.1,
  contains a free parameter $R_0\in\C[\Lambda\times \Lambda]$ for some abelian
  group $\Lambda$ and a system of equations on $R_0$. In the last section of
  this paper, using the previous theorem, we will solve a subset of these
  equations only depending  on an abelian group.

  Once these explicit solutions have been obtained, they can be plugged into the
  remaining equations which depend heavily on the specific parameters of
  the quantum group. This is done in a rather Lie-theoretic case-by-case
  argument in \cite{LN14}.
 
  \paragraph{\bf Acknowledgements} Partly supported by the DFG Priority Program
  1388 ``Representation theory''. We thank Christian Reiher for several helpful
  comments.


\section{A combinatorial principle}\label{sec:Filter}
  Before we turn to the proof of the main Theorem \ref{thm:Roots} we prove the
  following combinatorial principle. It shows that the main Theorem does not
  depend on specific properties of prime numbers, but is cominatorial in
  nature. It also gives the blueprint for the proof of the main theorem.
  
  \begin{thm}\label{thm:Filter}
    Let $N$ be finite set, $\P(N)$ denote the power set of $N$ and $\E\subset
    \P(N)$.
    Let $\mu\colon\P(N)\to[0,\infty]$ be a measure on $\P(N)$.  Then the
    following is equivalent:
    \begin{enumerate}[(i)]
      \item $a_{C}:=(-1)^{|N|} \sum_{D\in\E} (-1)^{|C\cup D|}
        e^{\mu(C-D)}\geq 0$ for all $C\subset N$.
      \item $\E=\{D\subset N\mid D\supset A\}$ for some $A\subset N$. Such a set
        $\E$ is called a \emph{filter} in $N$ (see e.g. \cite{Bou66} \S 6).
    \end{enumerate}
  \end{thm}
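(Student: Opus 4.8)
The plan is to prove both implications directly by manipulating the alternating sum $a_C$. The easy direction is (ii)$\Rightarrow$(i). Here I would take $\E=\{D : D\supset A\}$ and compute $a_C$ explicitly. Fixing $C$, I split the set $\E$ according to $D\cap C$ and $D\setminus C$: writing $D = (D\cap C)\cup(D\setminus C)$, the constraint $D\supset A$ becomes $D\cap C\supset A\cap C$ and $D\setminus C\supset A\setminus C$, and these two pieces range independently over the corresponding power sets. Since $|C\cup D| = |C| + |D\setminus C|$ and $\mu(C\setminus D)=\mu(C\setminus(D\cap C))$ depends only on the part of $D$ inside $C$, the sum over $D$ factors as a product: one factor $\sum_{E\colon A\cap C\subseteq E\subseteq C} e^{\mu(C\setminus E)}$ (which is a sum of positive terms, hence $>0$) times an alternating factor $\sum_{F\colon A\setminus C\subseteq F\subseteq N\setminus C}(-1)^{|C|+|F|}$. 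The latter is $(-1)^{|C|}$ times a signed sum over a Boolean lattice above $A\setminus C$, which vanishes unless $A\setminus C = N\setminus C$, i.e.\ unless $C\supseteq N\setminus A$; when it is nonzero it equals $(-1)^{|C|}(-1)^{|N\setminus C|} = (-1)^{|N|}$. Thus $(-1)^{|N|}a_C$ is either $0$ or a sum of positive terms, proving (i).

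For the hard direction (i)$\Rightarrow$(ii), I would argue contrapositively: assuming $\E$ is not a filter, produce a $C$ with $a_C<0$. A set system $\E$ fails to be a filter precisely when either (a) $\E=\emptyset$, (b) $\E$ is not upward closed, i.e.\ there exist $D\in\E$ and $D'\supsetneq D$ with $D'\notin\E$, or (c) $\E$ is upward closed but has two incomparable minimal elements (so it is not a principal filter). Case (a) is immediate if we allow it — $a_C=0$ everywhere, which one must check is permitted or else $\E\neq\emptyset$ is a standing hypothesis read off from Theorem~\ref{thm:Roots}. For the substantive cases I expect the cleanest route is a Möbius-inversion / inclusion-exclusion identity that rewrites $a_C$ in a manifestly sign-definite form. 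Concretely, I would try to show that the linear map $\E\mapsto (a_C)_{C\subseteq N}$, regarded as a transform of the indicator vector $(\one_{D\in\E})_{D}$, is given by a matrix whose inverse has a combinatorial meaning, so that the $a_C$ being all $\ge 0$ forces strong structural constraints on $\E$. The exponential weights $e^{\mu(C\setminus D)}$ are the obstacle to a purely linear-algebra argument, so the realistic plan is: first handle the ``generic'' regime where the $\mu$-values are large and well-separated, in which $a_C$ is dominated by a single term whose sign we can read off, forcing the filter condition term by term; then handle degeneracies (e.g.\ $\mu\equiv 0$, or coincidences among $\mu$-values) by a limiting or perturbation argument, or by a direct computation as in Lemma~\ref{lem:modification}, which the introduction signals is exactly the tool for showing that small modifications of a filter break positivity.

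The main obstacle, then, is the (i)$\Rightarrow$(ii) direction and specifically controlling the exponential weights: one needs an argument that is uniform in the measure $\mu$. I would expect to reduce to the statement that for a \emph{single} offending pair or triple of sets (as in (b) or (c) above) one can choose $C$ making the corresponding term dominant, and to organize the non-filter case as ``there is a minimal witness'' so that the induction/domination only has to be carried out for that local configuration. A secondary technical point is bookkeeping with the convention $\mu\colon\P(N)\to[0,\infty]$ allowing the value $+\infty$ (so $e^{\mu(\cdot)}$ may be $+\infty$), which must be handled so that ``$a_C\ge 0$'' still makes sense; I would either restrict to finite $\mu$ first and then take limits, or interpret the inequalities in $[-\infty,\infty]$ with the natural conventions. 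Once the domination lemma is in place, assembling the contrapositive is routine casework over (a), (b), (c).
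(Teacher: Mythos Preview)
Your treatment of (ii)$\Rightarrow$(i) is fine and is essentially the paper's Lemma~\ref{lem:comb_Values-ac}: the paper first reduces to $A=\varnothing$ and then performs exactly the factorization $D=(D\cap C)\cup(D\setminus C)$ that you describe.

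For (i)$\Rightarrow$(ii), however, your plan has a genuine gap and differs structurally from the paper's argument. The measure $\mu$ is \emph{given}, not a parameter you may vary; so there is no ``generic regime'' you can treat first and no meaningful perturbation or limiting step afterwards. Concretely, if $\mu$ is small (or even identically zero) then no single term of $a_C$ dominates, and a domination argument cannot get started. Your fallback---invoking the modification lemma from the introduction---also does not close the gap: that lemma (Lemma~\ref{lem:comb_modification}) only treats the two very specific configurations ``filter $\cup\{\varnothing\}$'' and ``$\P(N)\setminus\{\varnothing\}$''. It says nothing about a general $\E$ that is not upward closed, or is upward closed with several minimal elements; in the paper it is the \emph{last} step, applied only after the argument has already forced $\E$ into one of these two shapes.

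The idea you are missing is the paper's induction on $|N|$ via the reductions $\E_p=\{D\setminus\{p\}: p\in D\in\E\}\subset\P(N\setminus\{p\})$. The key computation (Lemma~\ref{lem:comb_acp}) shows, for any $\mu$ whatsoever, that the corresponding sums $a_C^p$ for $\E_p$ are \emph{convex combinations} of $a_C$ and $a_{C\cup\{p\}}$; hence the hypothesis $a_C\ge 0$ passes down to every $\E_p$. By induction each $\E_p$ is a filter, and a purely combinatorial lemma (Lemma~\ref{lem:comb_induction}) then shows that this forces $\E$ itself to be a filter, or a filter with $\varnothing$ adjoined, or $\P(N)\setminus\{\varnothing\}$. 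Only now does Lemma~\ref{lem:comb_modification} enter, disposing of the two exceptional shapes. The whole scheme is uniform in $\mu$ precisely because the transfer step is a convex combination rather than a domination estimate; your proposal does not supply any substitute for this mechanism.
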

  \noindent
  The remainder of this section is devoted to the proof of this theorem. \\
  
  A straightforward calculation gives the values of the $a_C$ if $\E$ is a
  filter. It shows immediately the implication $(i)\to(ii)$, but the precise
  value will also be crucial to the proof of the converse in what follows:
  
  \begin{lem}\label{lem:comb_Values-ac}
      Let $\E$ be a filter, i.e. $\E=\{D\subset N\mid D\supset A\}$ for some
      $A\subset N$. Then for any $C\subset N$ we have
      $$a_{C}:=(-1)^{|N|} \sum_{D\in\E} (-1)^{|C\cup D|}
        e^{\mu(C-D)}=\begin{cases}
	  e^{\mu(N-A)}\prod_{p\in N-A}\left(1+e^{-\mu(p)}\right), & C\cup A=N\\
	  0, & else
        \end{cases}$$
  \end{lem}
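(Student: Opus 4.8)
The plan is a direct evaluation of the sum. Since $\mu$ is a measure on the finite set $N$ it is additive, so $e^{\mu(S)}=\prod_{p\in S}e^{\mu(\{p\})}$ for every $S\subset N$; abbreviate $\mu(p)\df\mu(\{p\})$. Because $\E=\{D\subset N\mid D\supset A\}$ is the filter generated by $A$, every $D\in\E$ is uniquely of the form $D=A\cup E$ with $E$ ranging over the subsets of $M\df N-A$. Thus the defining sum for $a_C$ turns into a sum over all $E\subset M$, and the first task is to rewrite its summand in these variables.

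Next I would localize the two set operations occurring in the summand onto $M$. Splitting $C=C_A\cup C_M$ with $C_A\df C\cap A$ and $C_M\df C\cap M$, one checks $C\cup D=A\cup(C_M\cup E)$ as a disjoint union, whence $(-1)^{|C\cup D|}=(-1)^{|A|}(-1)^{|C_M\cup E|}$, and $C-D=C_M-E\subset M$, so $e^{\mu(C-D)}=\prod_{p\in C_M-E}e^{\mu(p)}$. Pulling the constant $(-1)^{|A|}$ in front, $a_C$ equals $(-1)^{|N|+|A|}$ times $\sum_{E\subset M}(-1)^{|C_M\cup E|}\prod_{p\in C_M-E}e^{\mu(p)}$. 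The key observation is that this last sum factors over the points of $M$: for each $p\in M$ the summand depends only on whether $p\in E$, because $|C_M\cup E|$ counts the $p\in M$ lying in $C_M$ or in $E$ and the product runs over the $p\in C_M$ not in $E$. Summing the two choices ``$p\in E$'' and ``$p\notin E$'' for a fixed $p$ yields the local factor $-(1+e^{\mu(p)})$ if $p\in C_M$ and the local factor $0$ if $p\notin C_M$. Hence the whole product vanishes unless $C_M=M$, i.e. unless $C\cup A=N$, in which case it equals $(-1)^{|M|}\prod_{p\in M}\bigl(1+e^{\mu(p)}\bigr)$.

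Finally I would collect the prefactors: since $M=N-A$ we have $|N|=|A|+|M|$, so $(-1)^{|N|+|A|}(-1)^{|M|}=1$, giving $a_C=\prod_{p\in N-A}(1+e^{\mu(p)})$ when $C\cup A=N$ and $a_C=0$ otherwise; rewriting $1+e^{\mu(p)}=e^{\mu(p)}(1+e^{-\mu(p)})$ and using additivity of $\mu$ once more produces the claimed expression $e^{\mu(N-A)}\prod_{p\in N-A}(1+e^{-\mu(p)})$. There is no real obstacle here — the argument is bookkeeping — the one thing to get right being the localization onto $M$ that makes the $E$-sum factor; the case where $\mu$ assumes the value $\infty$ is covered by reading the identity with the conventions $e^{-\infty}=0$ and $e^{\infty}=\infty$ (or, if one prefers, obtained from the finite case by continuity).
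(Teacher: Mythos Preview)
Your proof is correct and follows essentially the same approach as the paper's: both parametrize $D=A\cup E$ with $E\subset N-A$ and then factor the resulting sum over the points of $N-A$, obtaining a vanishing local factor exactly at the points of $(N-A)\setminus C$. The only cosmetic difference is that the paper separates out the reduction to $A=\varnothing$ as a first step and then splits $D=D_1\cup D_2$ with $D_1\subset C$, $D_2\subset N-C$, whereas you carry $A$ along and factor directly; the computations are the same.
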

  \begin{proof}
  \begin{align*}
      a_{C}
      &=(-1)^{|N|} \sum_{A\subset D\subset N} (-1)^{|C\cup D|}e^{\mu(C-D)}\\
      &=(-1)^{|N|} \sum_{D'\subset N-A} (-1)^{|C\cup A\cup D'|}e^{\mu(C-A-D')}\\
      &=(-1)^{|N-A|} \sum_{D'\subset N-A} (-1)^{|(C-A)\cup D'|}e^{\mu(C-A-D')}
  \end{align*}
  This shows that the value of $a_C$ for the filter generated by $A$ in $N$ is
  equal to the value of $a_{C-A}$ for the filter generated by $\varnothing$ in
  $N-A$. Thus is suffices to show the claim for the filter $\E=\P(N)$ generated
  by $A=\varnothing$:
  \begin{align*}
      a_{C}
      &=(-1)^{|N|} \sum_{D\subset N} (-1)^{|C\cup D|}e^{\mu(C-D)}\\
      &=(-1)^{|N|} \sum_{D_1\subset C,D_2\subset N-C}
	(-1)^{|C|+|D_2|}e^{\mu(C)-\mu(D_1)}\\
      &=(-1)^{|N|+|C|}e^{\mu(C)} \left(\sum_{D_2\subset N-C}
	(-1)^{|D_2|}\right)\left(\sum_{D_1\subset
	C} e^{-\mu(D_1)}\right)\\
      &=(-1)^{|N|+|C|}e^{\mu(C)}\left(\prod_{p\in N-C} \left(1-1\right)\right) 
	\left(\prod_{p\in C}\left(1+e^{-\mu(p)}\right)\right)\\
      &=\begin{cases}
	  e^{\mu(N)}\prod_{p\in N}\left(1+e^{-\mu(p)}\right), & C=N\\
	  0, & else
        \end{cases}
  \end{align*}
  The general formula for arbitrary $A$ follows by again replacing $N$ with
  $N-A$ and $C$ by $C-A$.
  \end{proof}

  We use this result to show that if $\E$ is a small modification of a
  filter, the main assumption $a_C\geq 0$ for all $C\subset N$ usually
  fails to be true.
  \begin{lem}\label{lem:comb_modification}~
    \begin{enumerate}[(a)]
      \item Let $\E\neq\P(N)$ be a filter in $|N|>1$,
        then $\E\cup\{\varnothing\}$ gives $a_C<0$ for some $C\subset N$.
      \item Let $\E=\P(N)$ in $|N|>1$, then
        $\E\setminus\{\varnothing\}$ gives $a_C<0$ for some $C\subset N$. 
    \end{enumerate}
    Note that on the other hand for $|N|=1$ and $\E$ the only filter
    $\E\neq\P(N)$ we have that both $\E\cup\varnothing$ and $\P(N)-\varnothing$
    are filters (namely $\P(N)$ and $\E$).
  \end{lem}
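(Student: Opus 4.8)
The plan is to use the explicit formula from Lemma \ref{lem:comb_Values-ac} to compute the $a_C$ for the two modified set systems and exhibit a concrete $C$ where $a_C<0$. By the reduction in the proof of Lemma \ref{lem:comb_Values-ac} (replacing $N$ by $N-A$), it suffices to treat part (b): for a filter $\E=\{D\mid D\supset A\}$ with $A\neq\varnothing$, the set system $\E\cup\{\varnothing\}$ has the same $a_C$ as $\P(N-A)\setminus\{\varnothing\}$ does for the ground set $N-A$, with $C$ replaced by $C-A$; since $A\neq\varnothing$ exactly when $\E\neq\P(N)$, and $|N|>1$ forces $|N-A|\geq 1$, the case $N-A=\varnothing$ (i.e. $\E\cup\{\varnothing\}=\P(N)\cup\{\varnothing\}=\P(N)$) only occurs when $|N-A|=0$, which one checks gives back a filter — consistent with the ``Note''. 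So both parts reduce to the single computation: $\E=\P(N)$, $|N|\ge 1$, remove $\varnothing$, and show some $a_C<0$.

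Next I would carry out that computation. Write $a_C'$ for the value attached to $\P(N)\setminus\{\varnothing\}$ and $a_C$ for the value attached to $\P(N)$. Since the two set systems differ by the single element $\varnothing$, we get
\begin{align*}
  a_C' &= a_C - (-1)^{|N|}(-1)^{|C\cup\varnothing|}e^{\mu(C-\varnothing)}
       = a_C - (-1)^{|N|}(-1)^{|C|}e^{\mu(C)}.
\end{align*}
By Lemma \ref{lem:comb_Values-ac}, $a_C=0$ unless $C=N$. So for $C\subsetneq N$ we obtain $a_C' = -(-1)^{|N|+|C|}e^{\mu(C)}$, which is strictly negative precisely when $|N|+|C|$ is even, i.e. $|C|\equiv|N|\pmod 2$. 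Since $|N|>1$, there is a proper subset $C\subsetneq N$ with $|C|\equiv|N|\pmod 2$ (e.g. take $C=N$ minus two elements if $|N|\ge 2$; if one wants $C$ proper and of the right parity this is available as soon as $|N|\ge 2$), and for that $C$ we have $a_C'=-e^{\mu(C)}<0$. This proves (b). Part (a) then follows from the reduction above: the set system $\E\cup\{\varnothing\}$ for a filter $\E\neq\P(N)$ generated by $A\neq\varnothing$ has $a_C$ equal to the $a_{C-A}$-value of $\P(N-A)\setminus\{\varnothing\}$ in the ground set $N-A$, and since $|N-A|\ge 1$, we may moreover check $|N-A|>1$ is not needed — we only need a proper subset of $N-A$ of the correct parity, which exists as long as $|N-A|\ge 1$: indeed for $|N-A|=1$ the empty set is a proper subset with $|C-A|=0\equiv 1\pmod 2$? — no. So here I must be slightly careful: if $|N-A|=1$ then $\E\cup\{\varnothing\}=\P(N)$ is again a filter, which is exactly the exceptional behaviour noted after the lemma, so the hypothesis $|N|>1$ together with $\E\neq\P(N)$ does not by itself prevent $|N-A|=1$; but in that case (a) makes no claim that is contradicted because we would need $a_C<0$ and instead everything is a genuine filter — wait, (a) does claim it.

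The delicate point, and what I expect to be the main obstacle, is precisely the bookkeeping of which small cases are genuinely excluded. Re-examining: in (a) we have $\E\neq\P(N)$ a filter in $|N|>1$; if $\E$ is generated by $A$ with $|A|=|N|-1$ (so $|N-A|=1$), is $\E\cup\{\varnothing\}$ a filter? $\E=\{A,N\}$ and $\E\cup\{\varnothing\}=\{\varnothing,A,N\}$, which is \emph{not} a filter since it is not upward closed (it omits the other singletons/subsets between $\varnothing$ and $A$) unless $|N|=1$. So for $|N|>1$ with $|A|=|N|-1$, $\E\cup\{\varnothing\}$ genuinely is not a filter, and we need $a_C<0$ for some $C$. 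Here $N-A$ is a one-point set $\{q\}$, and the relevant computation is for $\P(\{q\})\setminus\{\varnothing\}=\{\{q\}\}$ with ground set $\{q\}$: take $C-A=\{q\}$, i.e. $C=N$, then the $\varnothing$-removal changes $a_N$ by $-(-1)^{1+1}e^{\mu(\{q\})}$, giving $a_N'=e^{\mu(N-A)}(1+e^{-\mu(q)}) - e^{\mu(q)\text{-adjusted}}$; I would just recompute directly from the definition that $a_C'<0$ for an appropriate $C$ (here $C$ with $C\cup A=N$ and $|C|$ of suitable parity relative to $|N|$ — available since $C$ ranges over all supersets-relative-to-$A$, i.e. $C-A$ over all subsets of the one-point set $N-A$). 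In short: the clean path is to reduce everything to $\E=\P(M)\setminus\{\varnothing\}$ over a ground set $M$ with $|M|\ge 1$, compute $a_C'=-(-1)^{|M|+|C|}e^{\mu(C)}$ for all $C\subsetneq M$ and $a_M'=e^{\mu(M)}\prod_{p\in M}(1+e^{-\mu(p)}) - (-1)^{2|M|}e^{\mu(M)}$, and then case-split on $|M|=1$ versus $|M|\ge 2$: for $|M|\ge 2$ pick a proper $C$ of parity $\equiv|M|$; for $|M|=1$ check $a_M'=e^{\mu(M)}(1+e^{-\mu(q)}) - e^{\mu(q)}$ — but $\mu(M)=\mu(q)$, so $a_M'=e^{\mu(q)}+1-e^{\mu(q)}=1>0$ and $a_\varnothing'=-(-1)^{1+0}e^{0}=1>0$, confirming no negativity, i.e. the $|M|=1$ case is exactly where the statement fails and must be excluded by hypothesis — which is consistent because $|M|=|N-A|=1$ with $|N|>1$ forces $A\neq\varnothing$ hence $\E\neq\P(N)$, yet $\E\cup\{\varnothing\}$ is then... not a filter, contradiction with the claim. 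This tension is the real obstacle; I suspect the correct reading is that in (a) one additionally wants $\E\cup\{\varnothing\}$ to have $a_C<0$ and the construction of the bad $C$ must use $|N|>1$ more cleverly — specifically by choosing $C\not\supseteq A$ so that $a_C=0$ for the filter, whence $a_C'=-(-1)^{|N|+|C|}e^{\mu(C)}$, and now $C$ is a free subset of $N$ (not constrained modulo $A$), so we can definitely pick $C\subsetneq N$ with $C\not\supseteq A$ and $|C|\equiv|N|\pmod 2$ as long as $|N|>1$: e.g. if $A\ni q_0$, take $C=\{$one element of $N\setminus\{q_0\}\}$ if $|N|$ is odd, or $C=\{$two such elements$\}$ if $|N|$ is even and $|N|\ge 2$, or $C=\varnothing$ if $|N|$ is even — in every case $C\not\ni q_0$ so $C\not\supseteq A$, $a_C=0$, and $a_C'<0$. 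This works for all $|N|>1$, resolving the obstacle. I would write the proof in this order: (1) reduce (b) and (a) to the displayed single-element-difference identity; (2) invoke Lemma \ref{lem:comb_Values-ac} to kill $a_C$ on the appropriate $C$'s; (3) for (b) choose $C\subsetneq N$ with $|C|\equiv|N|$; (4) for (a) choose $C\subsetneq N$ with $C\not\supseteq A$ and $|C|\equiv|N|$, both possible since $|N|>1$; (5) remark that for $|N|=1$ the parity/containment choices collapse and indeed no negativity arises, matching the Note.
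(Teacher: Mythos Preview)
Your treatment of (b) is correct and coincides with the paper's choice $C=N\setminus\{p,q\}$. Part (a), however, contains two bookkeeping errors that break the argument. First, Lemma~\ref{lem:comb_Values-ac} says $a_C=0$ iff $C\cup A\neq N$, i.e.\ iff $C\not\supseteq N-A$; your condition ``$C\not\supseteq A$'' is the wrong one. Second, \emph{adding} $\varnothing$ changes $a_C$ by $+(-1)^{|N|+|C|}e^{\mu(C)}$ (not $-$), so for negativity you need $|C|\not\equiv |N|\pmod 2$, the opposite parity from what you wrote. Concretely, take $N=\{1,2\}$, $A=\{1\}$: your recipe selects $C=\varnothing$ (even parity, avoids $q_0=1$) and asserts $\tilde a_\varnothing<0$, but a direct count gives $\tilde a_\varnothing=1-1+1=1>0$. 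The paper instead picks, for $A\neq N$, the set $C=N-p$ with some $p\in N-A$: then $p\notin C$ ensures $C\cup A\neq N$, so $a_C=0$, and $|C|=|N|-1$ gives $\tilde a_{N-p}=-e^{\mu(N-p)}<0$.

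Your scheme also overlooks the case $A=N$, where $N-A=\varnothing$ and hence $a_C=1$ for \emph{every} $C$; one cannot arrange $a_C=0$ at all here, and the paper treats this separately by taking $C=N-q$ to obtain $\tilde a_{N-q}=1-e^{\mu(N-q)}$. Finally, the attempted reduction of (a) to (b) via $N\mapsto N-A$ from Lemma~\ref{lem:comb_Values-ac} is not valid: that substitution applies only to the sum over the filter, while the extra $D=\varnothing$ term $(-1)^{|N|+|C|}e^{\mu(C)}$ does not become $-(-1)^{|N-A|+|C-A|}e^{\mu(C-A)}$ in general (both the sign and the exponent $\mu(C)$ versus $\mu(C-A)$ fail to match unless $C\cap A=\varnothing$ and $|A|$ is odd).
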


  \begin{proof}
    \begin{enumerate}[(a)]
      \item By assumption $\E$ is a filter generated by some $A\neq
      \varnothing$ for $|N|>1$. We wish to find a negative value
      of some $\tilde a_{C}$ for the
      set system $\tilde{\E}:=\E\cup\{\varnothing\}$: Suppose first that also
      $A\neq N$ and choose some $p\in N-A$, then $a_{N-p}=0$ by Lemma 
      \ref{lem:comb_Values-ac} and thus:      
        \begin{align*}
	  \tilde{a}_{N-p}
	  &=(-1)^{|N|} \sum_{D\in\E\cup \{\varnothing\}}
	  (-1)^{|C\cup D|} e^{\mu(C-D)}\\
	  &=a_{N-p}+(-1)^{|N|+|N-p|}e^{\mu(N-p)}\\
	  &=-e^{\mu(N-p)}<0
        \end{align*}
        Suppose now that $A=N$ and choose some $q\in N$, then again by Lemma 
	\ref{lem:comb_Values-ac}:
	\begin{align*} 
	  \tilde{a}_{N-q}
	  &=(-1)^{|N|} \sum_{D\in\E\cup \{\varnothing\}}
	  (-1)^{|C\cup D|} e^{\mu(C-D)}\\
	  &=a_{N-q}+(-1)^{|N|+|N-q|}e^{\mu(N-q)}\\
	  &=1-e^{\mu(N-q)}<0
	\end{align*}
      \item By assumption $\E=\P(N)$ for $|N|>1$, so $\E$ is the filter
      generated by $A=\{\varnothing\}$. Then again by Lemma
      \ref{lem:comb_Values-ac} $a_C=0$ for $C\neq N$. Choose any $q\neq p\in N$,
      then we calculate $\tilde{a}_{N-\{p,q\}}$ for the filter
      $\tilde{\E}:=\E-\{\varnothing\}$:
	\begin{align*} 
	  \tilde{a}_{N-\{p,q\}}
	  &=(-1)^{|N|} \sum_{D\in\E-\{\varnothing\}}
	  (-1)^{|C\cup D|} e^{\mu(C-D)}\\
	  &=a_{N-q}-(-1)^{|N|+|N-\{p,q\}|}e^{\mu(N-\{p,q\})}\\
	  &=1-e^{\mu(N-\{p,q\})}<0
	\end{align*}
      \end{enumerate}
  \end{proof}
  We now proceed by introducing the induction step along $|N|$:
    \begin{defi}\label{defi:comb_Ep}
      Let $\E$ be any set system in $N$ and $p\in N$, then we define a new set
      system for $N-p$ by
      \begin{equation*}
        \E_p=\{D-p\mid p\in D,D\in\E \}
      \end{equation*}
      For $C\subset N-p$ we denote by $a_C^p$ the corresponding sum over $\E_p$,
      i.e.
      \begin{equation*}
        a_C^p =(-1)^{|N-p|} \sum_{D\in\E_p}(-1)^{|C\cup D|} e^{\mu(C-D)}
      \end{equation*}
      We will in the following only consider $\E_p$ for all $p$, such that
      there exists any $D\in \E$ with $p\in D$, so $\E_p$ is not empty.
    \end{defi}

    We first wish to prove that our main assumption $a_C\geq 0$ implies
    $a_C^p\geq 0$ in $\E_p$:
    \begin{lem}\label{lem:comb_acp}
      For any $p\in N$ we get for all $C\in\E_p$ (note that $p\not\in C$):
      \begin{equation*}
        a_C^p =
	\frac{e^{\mu(p)}}{1+e^{\mu(p)}}a_C+\frac{1}{1+e^{\mu(p)}}a_{C\cup p}
      \end{equation*}
      In particular, $a_C\geq 0$ for all $C\in\E$ implies $a_C^p\geq 0$ for
      all $C\in \E_p$.
    \end{lem}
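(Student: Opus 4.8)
The plan is to establish the claimed identity by a direct expansion of $a_C$ and $a_{C\cup p}$ in terms of sums over $\E$, splitting each sum according to whether the index set $D$ contains $p$ or not, and then matching the resulting pieces against the definition of $a_C^p$. The only sets $D\in\E$ with $p\in D$ contribute to $\E_p$, so these will be the terms we want to isolate.

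First I would fix $C\subset N-p$ with $C\in\E_p$, so in particular $p\notin C$. I would write out
\begin{align*}
  a_C &= (-1)^{|N|}\sum_{D\in\E}(-1)^{|C\cup D|}e^{\mu(C-D)},\\
  a_{C\cup p} &= (-1)^{|N|}\sum_{D\in\E}(-1)^{|(C\cup p)\cup D|}e^{\mu((C\cup p)-D)},
\end{align*}
and then split the sum $\sum_{D\in\E}$ into the part with $p\in D$ and the part with $p\notin D$. For $D$ with $p\in D$: in $a_C$ one has $|C\cup D| = |(C-p)\cup D| + 1$ since $p\in D$ but $p\notin C$... more directly, writing $D = D'\cup p$ with $D'=D-p\subset N-p$, one gets $|C\cup D| = |C\cup D'| + 1$ and $C - D = C - D'$ (removing $p$ from $D$ does not change $C-D$ since $p\notin C$), while for $a_{C\cup p}$ one gets $|(C\cup p)\cup D| = |C\cup D'|+1$ and $(C\cup p) - D = C - D'$ as well (since $p\in D$). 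For $D$ with $p\notin D$: in $a_C$ nothing special happens, whereas in $a_{C\cup p}$ one has $|(C\cup p)\cup D| = |C\cup D|+1$ and $(C\cup p)-D = (C-D)\cup p$, so that term picks up a sign $-1$ and a factor $e^{\mu(p)}$ relative to the corresponding term of $a_C$ — here I would use additivity of the measure $\mu$ on the disjoint union $(C-D)\sqcup\{p\}$.

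Consequently, if I denote by $S_p$ the contribution of the $D$ with $p\in D$ (rewritten over $D'\in\E_p$, with the overall sign $(-1)^{|N|}$ becoming $(-1)^{|N-p|}$ up to the explicit factor, so that $S_p$ is essentially $a_C^p$) and by $T$ the contribution of the $D$ with $p\notin D$, I will obtain something like
\begin{align*}
  a_C &= e^{\mu(p)}\cdot(\text{piece giving }a_C^p) - e^{\mu(p)}\cdot T',\\
  a_{C\cup p} &= (\text{piece giving }a_C^p) + e^{\mu(p)}\cdot T'
\end{align*}
after tracking the signs carefully; the exact bookkeeping of the powers of $-1$ (how $(-1)^{|N|}$ relates to $(-1)^{|N-p|}$, and the extra sign from the $p\in D$ versus $p\notin D$ split) is the one place I expect to have to be genuinely careful. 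Then forming $e^{\mu(p)}a_C + a_{C\cup p}$ the unwanted terms $T'$ cancel and I am left with $(1+e^{\mu(p)})$ times the $a_C^p$-piece, which rearranges to the stated formula. The final sentence is immediate: if $a_C\ge 0$ and $a_{C\cup p}\ge 0$ then $a_C^p$ is a non-negative combination of non-negative numbers, hence $\ge 0$ — though note one must check $C\cup p\in\E$ (or at least that $a_{C\cup p}$ is among the quantities assumed non-negative), which should follow since $C\in\E_p$ means $C\cup p\in\E$ by Definition \ref{defi:comb_Ep}. The main obstacle is purely the sign and cardinality bookkeeping in the split; there is no conceptual difficulty once the three cardinality identities $|C\cup(D'\cup p)| = |C\cup D'|+1$, $|(C\cup p)\cup(D'\cup p)| = |C\cup D'|+1$, and $|(C\cup p)\cup D| = |C\cup D|+1$ (for $p\notin D$) together with the measure-additivity $\mu((C-D)\cup p) = \mu(C-D)+\mu(p)$ are in hand.
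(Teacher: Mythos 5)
Your strategy is essentially the same as the paper's: split the sums over $D\in\E$ into the parts with $p\in D$ and $p\notin D$, recognize the $p\in D$ part (after reindexing $D'=D-p$) as producing $a_C^p$ up to an overall sign matching $(-1)^{|N|}=-(-1)^{|N-p|}$, and arrange for the $p\notin D$ contributions to cancel in the linear combination $e^{\mu(p)}a_C + a_{C\cup p}$. The four cardinality and measure-additivity identities you isolate are exactly the ones that drive the calculation. That said, the speculative intermediate relations you wrote down do not have the correct coefficients: carrying the bookkeeping through gives $a_C = a_C^p + T$ and $a_{C\cup p} = a_C^p - e^{\mu(p)}T$, where $T$ denotes the $p\notin D$ contribution to $a_C$; then indeed $e^{\mu(p)}a_C + a_{C\cup p} = (1+e^{\mu(p)})a_C^p$. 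In the form you wrote, with $a_C^p$ carrying an $e^{\mu(p)}$ in $a_C$ and the $T'$-terms of equal magnitude and opposite sign, the weighted combination $e^{\mu(p)}a_C + a_{C\cup p}$ would \emph{not} kill the $T'$-part. Since you explicitly flagged the sign and cardinality bookkeeping as the remaining work, this is a fair sketch rather than a gap, but you should be aware the guessed form is off. One further remark: the worry about whether $C\cup p\in\E$ is not needed. The quantities $a_C$ are defined, and assumed non-negative in Theorem~\ref{thm:Filter}(i), for \emph{every} $C\subset N$, not just $C\in\E$; the phrase ``for all $C\in\E$'' in the lemma statement is a slip for ``for all $C\subset N$'', as the subsequent use in the induction (which needs $a_C^p\ge 0$ for all $C\subset N-p$) confirms.
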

  \begin{proof}
    We calculate the right hand side by splitting the sum over all $D\in \E$
    into two summands for all $p\not\in D$ resp. $p\in D$ and use $p\not\in C$.
    The latter set of $D$ then correspond to $D'=D-p$ in $\E_p$:
    \begin{align*}
      &\frac{e^{\mu(p)}}{1+e^{\mu(p)}}a_C+\frac{1}{1+e^{\mu(p)}}a_{C+p}\\
      &=(-1)^{|N|} \frac{e^{\mu(p)}}{1+e^{\mu(p)}}
      \sum_{p\in D\in\E}(-1)^{|C\cup D|}e^{\mu(C-D)}
      +(-1)^{|N|} \frac{1}{1+e^{\mu(p)}}
      \sum_{p\in D\in\E}(-1)^{|C\cup p \cup D|}e^{\mu((C\cup p)-D)}\\
      &+(-1)^{|N|} \frac{e^{\mu(p)}}{1+e^{\mu(p)}}
      \sum_{p\not\in D\in\E}(-1)^{|C\cup D|}e^{\mu(C-D)}
      +(-1)^{|N|} \frac{1}{1+e^{\mu(p)}}
      \sum_{p\not\in D\in\E}(-1)^{|C\cup p\cup D|}e^{\mu((C\cup p)-D)}\\
      &=(-1)^{|N|} \frac{e^{\mu(p)}}{1+e^{\mu(p)}}
      \sum_{p\in D\in\E}(-1)^{|C\cup D|}e^{\mu(C-D)}
      +(-1)^{|N|} \frac{1}{1+e^{\mu(p)}}
      \sum_{p\in D\in\E}(-1)^{|C\cup D|}e^{\mu(C-D)}\\
      &+(-1)^{|N|} \frac{e^{\mu(p)}}{1+e^{\mu(p)}}
      \sum_{p\not\in D\in\E}(-1)^{|C\cup D|}e^{\mu(C-D)}
      +(-1)^{|N|} \frac{1}{1+e^{\mu(p)}}
      \sum_{p\not\in D\in\E}(-1)^{|C\cup D|+1}e^{\mu(C-D)+\mu(p)}\\
      &=\left(\frac{e^{\mu(p)}}{1+e^{\mu(p)}}+\frac{1}{1+e^{\mu(p)}}\right)
      \cdot (-1)^{|N|} \sum_{p\in D\in\E}(-1)^{|C\cup D|}e^{\mu(C-D)}\\
      &+\left(\frac{e^{\mu(p)}}{1+e^{\mu(p)}}
	-\frac{1}{1+e^{\mu(p)}}\cdot e^{\mu(p)}\right)\cdot 
      (-1)^{|N|}\sum_{p\not\in D\in\E}(-1)^{|C\cup D|}e^{\mu(C-D)}\\
      &=(-1)^{|N|} \sum_{p\in D\in\E}(-1)^{|C\cup D|}e^{\mu(C-D)}
      =(-1)^{|N-p|} \sum_{D'\in\E_p}(-1)^{|C\cup D'|}e^{\mu(C-D')}=a_C^p\\
    \end{align*}
  \end{proof}

  Thus if all $a_C\geq 0$ by induction hypothesis all $\E_p$ are filters.
  We now conclude the  induction that $\E$ is  a filter if all possible
  reductions $\E_p$ are  filters. As induction step, we use the following lemma.

  \begin{lem}\label{lem:comb_induction}
    Let $\E$ be a set system for $N$ such that all $\E_p$ are
    filters generated by sets $A_p\subset N-p$. Then either there exists a
    $p\in N$ with $p\in A_q$ for all $p\neq q$ or for all $p\in N$ we have
    $A_p=\varnothing$.\\
    In the first case we show that $\E$ is the filter generated by $p\cup A_p$
    or $\E$ is the set system consisting of this filter together with
    $D=\varnothing$. In the second case we show $\E=\P(N)$ or
    $\E=\P(N)-\varnothing$.
  \end{lem}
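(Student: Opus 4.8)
The plan is to first convert the filter hypotheses on the sets $\E_p$ into a direct description of $\E$, and then to exploit the ``global'' fact that this description must be upward closed away from $\varnothing$. Write $P=\{p\in N\mid \E_p\neq\varnothing\}$ and, for $p\in P$, put $B_p:=A_p\cup\{p\}$ (so $p\in B_p$ since $A_p\subset N-p$). Unwinding Definition~\ref{defi:comb_Ep}, I would first observe: if $p\in D\in\E$ then $D-p\in\E_p$, hence $D-p\supseteq A_p$ and $D\supseteq B_p$; conversely, if $D\supseteq B_p$ then $D-p\supseteq A_p$ lies in $\E_p$, which forces $D\in\E$. From this I extract three facts: (a) every nonempty $D\in\E$ contains $B_p$ for each $p\in D$, and in particular $D\subseteq P$; (b) $\E\setminus\{\varnothing\}=\bigcup_{p\in P}\{D\subset N\mid D\supseteq B_p\}$, a union of principal filters, hence upward closed; (c) each $B_p$ itself lies in $\E$.

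The crucial step is then to play (a), (b), (c) against each other. Fix $p,x\in P$. By (c) and the upward closure (b), $B_p\cup\{x\}\in\E$; since $x$ belongs to this set, (a) gives $B_x\subseteq B_p\cup\{x\}$, and deleting $x$ (using $x\notin A_x$) yields $A_x\subseteq B_p$. As $p,x\in P$ were arbitrary, this proves the uniform containment
\[
  A_x\subseteq I:=\bigcap_{p\in P}B_p\qquad\text{for all } x\in P .
\]

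Next I would dispose of the degenerate cases $\E\subseteq\{\varnothing\}$ (where no $\E_p$ imposes anything) and otherwise assume $P\neq\varnothing$. If every $A_p=\varnothing$ we are in the second alternative of the dichotomy. Otherwise $I\supseteq A_x\neq\varnothing$ for some $x$, so I may pick $p^\ast\in I$; then $p^\ast\in B_q$ for every $q\in P$, and since $B_q\subset P$ (fact (a)) this forces $p^\ast\in P$, while for $q\neq p^\ast$ we get $p^\ast\in B_q\setminus\{q\}=A_q$ — the first alternative. It then remains to pin down $\E$. In the first case, with $p:=p^\ast$, applying (a) to $D=B_q$ and the element $p\in B_q$ gives $B_p\subseteq B_q$ for all $q\in P$, so in the union (b) every set $\{D\supseteq B_q\}$ lies inside $\{D\supseteq B_p\}$; hence $\E\setminus\{\varnothing\}$ is exactly the filter generated by $B_p=p\cup A_p$, and $\E$ is that filter, possibly with $\varnothing$ adjoined. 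In the second case $B_p=\{p\}$ for all $p\in P$; if some $q\in N$ were not in $P$, then $\{p\}\in\E$ for any $p\in P$ would force $\{p,q\}\in\E$ by (b), whence $q\in P$, a contradiction; so $P=N$, $\E\setminus\{\varnothing\}=\P(N)\setminus\{\varnothing\}$, and $\E=\P(N)$ or $\E=\P(N)-\varnothing$.

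I expect the main obstacle to be the middle step: recognising that the hypotheses already force $\E\setminus\{\varnothing\}$ to be upward closed, and then combining this with the local implication ``$x\in D\in\E\Rightarrow B_x\subseteq D$'' to squeeze out the uniform bound $A_x\subseteq\bigcap_{p\in P}B_p$. Once that bound is available, the dichotomy and the precise form of $\E$ fall out essentially by inspection — choose a point of the intersection, and settle the all-$A_p$-empty case with the one-line upward-closure argument above.
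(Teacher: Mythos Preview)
Your argument is correct and follows essentially the same route as the paper: both hinge on the two observations that $p\in D\in\E$ implies $B_p\subseteq D$ and that $D\supseteq B_p$ implies $D\in\E$, and then use these to establish the dichotomy and pin down $\E\setminus\{\varnothing\}$. Your packaging is a bit more systematic (extracting upward closure of $\E\setminus\{\varnothing\}$ and the uniform bound $A_x\subseteq\bigcap_p B_p$ before choosing $p^\ast$), whereas the paper picks $p\in A_{q'}$ first and argues by a direct contradiction; note also that under the lemma's hypothesis every $\E_p$ is a \emph{nonempty} filter, so your set $P$ is all of $N$ and the degenerate cases $\E\subseteq\{\varnothing\}$ never occur.
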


  \begin{proof}
    Assume there exists $q'$ with $A_{q'}\neq \varnothing$ and let $p\in
    A_{q'}$, then we claim $p\in A_q$ for all $q\neq p$. We prove this by
    contradiction, since if    $p\not\in A_q$ for some $q$ then we consider
    $q'\cup A_q\in \E_q$ (since    $\E_q$ is a filter) and hence $q\cup q'\cup
    A_q\in \E$ (by definition of    $\E_q$). But then $q\cup A_q \in\E_{q'}$ and
    $A_{q'}\subset q\cup A_q$    (since $\E_{q'}$ is a filter). But this
    contradicts $p\not\in A_q$, which    shows the first part of the Lemma.

    We now prove the consequences in the two cases. In the first case we assume
    it
    exists $p\in A_q$ for all $q\neq p$. Let $\D\supset p\cup A_p$ then $D\in
    \E_p$ (since $E_p$ is a filter) and $D\in \E$ (by definition of $\E_p$).
    Let now conversly by $D\in \E$. If $p\in D$ then we have $D-p\in\E_p$ (by
    definition of $\E_p$) and hence also $D\supset A_p$ (since $\E_p$ is a
    filter), implying $\D\supset p\cup A_p$. If $p\not\in D$ then either
    $D=\varnothing$ or some $q\in D$. In the latter case $D-q\in\E_q$, hence
    $D-q\supset A_q\ni p$ which is a contradicion. So either $\D\supset p\cup
    A_p$ or $D=\varnothing$ as asserted.\\
    In the second case we assume $A_p=\varnothing$ for all $p\in N$, hence any
    for any set $D\neq \varnothing$ we may chose some $p\in D$ and yield
    $D-p\in\E_p$ and hence $D\in \E_q$. Hence any set with the possible
    exception of $D=\varnothing$ is in $\E$ as asserted.
  \end{proof}

  We can now conclude the inductive proof of the implication (i)$\to$(ii)
  in Theorem \ref{thm:Filter}: 
  For $|N|=0$ the only set system is $\E=\{\varnothing\}$ and is a filter. Let
  $|N|\geq 1$ and $\E$ such that all $a_C\geq 0$, then $a_C^p\geq 0$ for all 
  $p\in N$ by Lemma \ref{lem:comb_acp}. Thus by induction hypothesis all $\E_p$ 
  are filters. Then by Lemma \ref{lem:comb_induction} we have that either $\E$ 
  is a filter (in which case the induction step is finished) or some filter 
  $\E\neq \P(N)$ together with $\varnothing$ or $\E=\P(N)-\{\varnothing\}$. By
  Lemma \ref{lem:comb_modification} these two cases can only fulfill $a_C\geq 0$
for  $|N|=1$ where both are filters. This concludes the proof
  of Theorem \ref{thm:Filter}.

\section{A theorem about roots of unity}\label{sec:Roots}
  \begin{thm}\label{thm:Roots}
    Let $U$ be a non-empty finite set of complex roots
    of unity and consider the power sums
      $
        a_k\df\sum_{\zeta\in U} \zeta^{k}
      $.
    Then all $a_k$ are non-negative integers iff $U$ is actually a
    multiplicative group of roots of unity (i.e. all $n$-th roots of unity for
    some $n$).
  \end{thm}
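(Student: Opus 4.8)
The plan is to reduce Theorem~\ref{thm:Roots} to the combinatorial principle of Theorem~\ref{thm:Filter}, or rather to a variant of its proof strategy adapted to a poset. First I would observe that the "if" direction is immediate: if $U$ is the group of all $n$-th roots of unity, then $a_k = n$ when $n\mid k$ and $a_k=0$ otherwise, so all power sums are non-negative integers. The substance is the "only if" direction. Let $N$ be the least common multiple of the orders of the $\zeta\in U$, so that $U$ consists of $N$-th roots of unity. Grouping the elements of $U$ by their exact order $d\mid N$, write $U = \bigsqcup_{d} U_d$ with $U_d$ the elements of order exactly $d$; then $a_k = \sum_{d\mid N} \sum_{\zeta\in U_d}\zeta^k$. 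Using the standard fact that the sum of the primitive $d$-th roots of unity is the M\"obius value $\mu(d)$, and more generally that $\sum_{\zeta\text{ prim. }d\text{th}}\zeta^k$ depends only on $\gcd(k,d)$ via a Ramanujan-sum formula, one can express each $a_k$ as a $\Z$-linear combination of indicator-type quantities attached to the divisors $d$ for which $U_d\neq\varnothing$.

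Next I would make the set-up combinatorial. Let $\E = \{\, N/\mathrm{ord}(\zeta)\mid \zeta\in U\,\}$, viewed as a sub-poset of the divisor lattice of $N$ ordered by divisibility; in the squarefree case a divisor of $N$ is just a subset of the prime set, and $\E$ becomes a set system $\E\subset\P(N)$ in exactly the sense of Theorem~\ref{thm:Filter}, with $\mu$ a concrete number-theoretic function (a logarithm of a prime, so that $e^{\mu(p)}=p$ and the products $\prod(1+e^{-\mu(p)})$ of Lemma~\ref{lem:comb_Values-ac} become rational). Under this dictionary the hypothesis that all $a_k\ge 0$ translates into the inequalities $a_C\ge 0$ of Theorem~\ref{thm:Filter}(i), and the conclusion that $U$ is a full group of roots of unity translates into $\E$ being a filter (indeed the principal filter above $\varnothing$, i.e.\ $\E=\P(N)$, corresponds to $U$ containing all $N$-th roots of unity). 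So in the squarefree case Theorem~\ref{thm:Roots} is literally Theorem~\ref{thm:Filter}, plus the elementary check that "$\E$ is a filter in the divisor poset" together with the integrality of the $a_k$ forces $U$ to be all $n$-th roots of unity for the appropriate $n$.

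To handle the general (not necessarily squarefree) case I would run the same induction on $N$ that was used for Theorem~\ref{thm:Filter}, now along the poset of divisors rather than the Boolean lattice. Mirroring Definition~\ref{defi:comb_Ep}, for each prime $p\mid N$ define $\E_p$ by lowering the $p$-adic valuation by one in every divisor whose valuation was positive and discarding the rest; this corresponds to passing from $U$ to a set of $(N/p)$-th roots of unity. The analogue of Lemma~\ref{lem:comb_acp} — expressing the new sums $a_C^p$ as a positive-coefficient combination of $a_C$ and $a_{C\cup p}$ — should go through with the weights $p/(p+1)$ and $1/(p+1)$ in place of $e^{\mu(p)}/(1+e^{\mu(p)})$, and the analogues of Lemmas~\ref{lem:comb_Values-ac}, \ref{lem:comb_modification}, \ref{lem:comb_induction} likewise. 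Then the induction hypothesis gives that each $\E_p$ is a filter, Lemma~\ref{lem:comb_induction}'s analogue pins $\E$ down to a filter or one of two near-filter exceptions, and Lemma~\ref{lem:comb_modification}'s analogue rules out the exceptions except in the base case.

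The main obstacle I anticipate is the passage from the Boolean lattice $\P(N)$ to the divisor lattice: a prime power $p^a\parallel N$ with $a\ge 2$ means the "coordinate at $p$" has $a+1$ levels rather than $2$, so the clean two-term identities (a single factor $1+e^{-\mu(p)}$, the binary split $D=D_1\sqcup D_2$) of Section~\ref{sec:Filter} must be replaced by telescoping sums over the chain of $p$-powers, and one must verify that these still collapse to give the analogue of Lemma~\ref{lem:comb_acp} and that the explicit "filter values" remain non-negative with the right integrality. Establishing that the reduced system $\E_p$ really does satisfy the reduced hypothesis (i.e.\ that lowering one prime's exponent preserves $a_k\ge 0$), and that it corresponds to an honest set of $(N/p)$-th roots of unity with integer power sums, is where the number-theoretic content — Ramanujan sums, M\"obius inversion over the divisor lattice — has to be marshalled carefully; everything after that is the already-established combinatorial machinery transported to the new poset.
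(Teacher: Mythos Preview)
Your overall strategy is the paper's own: translate $U$ to a set $\E$ of divisors of $N$, run the same induction on $N$ via the reductions $\E_p$, and use the analogues of Lemmas~\ref{lem:comb_Values-ac}--\ref{lem:comb_induction} on the divisor lattice. That part is fine and matches Section~\ref{sec:Roots} closely (one cosmetic discrepancy: the weights in the analogue of Lemma~\ref{lem:comb_acp} are not $p/(p{+}1)$ and $1/(p{+}1)$; the paper's Lemma~\ref{lem:acp} gives $a_c^p=a_c$ when $p^2c\mid N$ and $a_c^p=\tfrac{p-1}{p}a_c+\tfrac{1}{p}a_{pc}$ otherwise --- the two-case split is precisely the chain-versus-Boolean phenomenon you anticipated).

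There is, however, one genuine gap. You decompose $U=\bigsqcup_d U_d$ by exact order, then invoke Ramanujan sums for $\sum_{\zeta\in U_d}\zeta^k$ and parametrize $U$ by $\E=\{N/\mathrm{ord}(\zeta):\zeta\in U\}$. Both moves silently assume that each $U_d$ is either \emph{all} primitive $d$-th roots of unity or empty; otherwise the Ramanujan-sum formula does not apply and $\E$ does not determine $U$ (e.g.\ $U=\{1,e^{2\pi i/5}\}$ has a perfectly good $\E$ but is not a union of Galois orbits, and its $a_1$ is not even real). The paper supplies this missing step as Lemma~\ref{lem:Orbits}: because the $a_k$ are \emph{integers}, Newton's identities force the elementary symmetric functions of $U$, and hence the coefficients of $\prod_{\zeta\in U}(x-\zeta)$, to lie in $\Q$; thus $U$ is stable under $\mathrm{Gal}(\Q(\xi_N)/\Q)$, so each $U_d$ is full or empty. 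This is the only place the integrality (rather than non-negativity) hypothesis is used, and you must insert it before any of the combinatorial machinery can start.
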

  \noindent
  The remainder of this section is devoted to the proof of this theorem. \\

  Since $U$ is finite, we may assume some integer $N$
  such that $U\subset \Sigma_N=\{\zeta\in\C\mid \zeta^N=1\}$. Let $\xi_N$ the
  primitive $N$-th root of unity $\exp(2\pi i/N)$. We start
  with the observation, that the set $U$ is a union of Galois orbits of ${\rm
  Gal}(\xi_N)$ acting on $\Sigma_N$. \\
  In the following, we denote by $(a,b)$ the greatest common divisor of
  two integers $a,b$. 

  \begin{lem}\label{lem:Orbits}
    Any $U$ as in Theorem \ref{thm:Roots} is invariant
    under the Galois group $G={\rm Gal}(\xi_N)$, i.e. it is a union of orbits
    of $G$ acting on $\Sigma_N$. Each orbit consist of all primitive 
    roots of unity for some divisor of $N$ and hence $a_k$ only depends on
    $(k,N)$.
  \end{lem}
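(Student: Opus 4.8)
The plan is to derive Galois invariance of $U$ from the (assumed) rationality of the numbers $a_k$ by a Fourier-inversion argument, and then to read off the two remaining assertions of the Lemma from standard cyclotomic arithmetic.

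First I would regard the $a_k$ as the discrete Fourier coefficients of the indicator function of $U$. Let $f\colon\Sigma_N\to\{0,1\}$ be this indicator, so that $a_k=\sum_{\zeta\in\Sigma_N}f(\zeta)\,\zeta^{k}$; Fourier inversion on the cyclic group $\Sigma_N\cong\Z/N$ gives $f(\zeta)=\frac1N\sum_{k=0}^{N-1}a_k\,\zeta^{-k}$ for every $\zeta\in\Sigma_N$. Each $\sigma\in G$ acts on $\Sigma_N$ by $\sigma(\zeta)=\zeta^{t}$ for a fixed $t\in(\Z/N)^{\times}$ with $\sigma(\xi_N)=\xi_N^{t}$, and $\sigma$ fixes every $a_k$. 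Applying $\sigma$ to the inversion formula yields $\sigma\!\left(f(\zeta)\right)=\frac1N\sum_{k}a_k\,(\zeta^{t})^{-k}=f(\zeta^{t})=f(\sigma(\zeta))$; but $f$ takes values in $\Q$, so $\sigma(f(\zeta))=f(\zeta)$, whence $f(\zeta)=f(\sigma(\zeta))$. Thus $\zeta\in U\Leftrightarrow\sigma(\zeta)\in U$, i.e. $U$ is a union of $G$-orbits on $\Sigma_N$. (Only rationality of the $a_k$, not their non-negativity, enters at this step.)

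Next I would identify the orbits. For each divisor $d\mid N$ the restriction homomorphism $G=\mathrm{Gal}(\xi_N)\to\mathrm{Gal}(\xi_d)$ is surjective, and $\mathrm{Gal}(\xi_d)\cong(\Z/d)^{\times}$ permutes the primitive $d$-th roots of unity simply transitively; hence the $G$-orbit of any element of order $d$ in $\Sigma_N$ is exactly the set $O_d$ of all primitive $d$-th roots of unity, and $\Sigma_N=\bigcup_{d\mid N}O_d$ is the orbit decomposition. Combined with the previous paragraph, $U=\bigcup_{d\in S}O_d$ for some set $S$ of divisors of $N$.

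Finally I would compute the contribution of a single orbit to $a_k$. Writing a primitive $d$-th root as $\xi_d^{a}$ with $(a,d)=1$, the power $(\xi_d^{a})^{k}=\xi_d^{ak}$ has order $d/(k,d)$ independently of $a$, and as $a$ runs over $(\Z/d)^{\times}$ it meets each primitive $\bigl(d/(k,d)\bigr)$-th root of unity the same number of times; therefore $\sum_{\zeta\in O_d}\zeta^{k}$ is Ramanujan's sum $c_d(k)$, which depends on $k$ only through $(k,d)$. Since $d\mid N$ one checks $(k,d)=\bigl((k,N),d\bigr)$, so $c_d(k)$ depends on $k$ only through $(k,N)$; summing over $S$ gives $a_k=\sum_{d\in S}c_d(k)$, a function of $(k,N)$ alone, as claimed. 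The only step that is not purely formal is the first one — realising that mere rationality of all the $a_k$ already forces $U$ to be $G$-stable; everything afterwards is routine cyclotomic bookkeeping.
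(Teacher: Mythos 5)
Your proof is correct, but it takes a genuinely different route from the paper. The paper establishes Galois invariance of $U$ by forming $p(x)=\prod_{\zeta\in U}(x-\zeta)$ and invoking Newton's identities to show that the elementary symmetric functions of the $\zeta$'s, hence the coefficients of $p$, are rational; since $G$ permutes the roots of a rational polynomial, $U$ is a union of orbits. You instead treat $(a_k)_k$ as the discrete Fourier transform of the indicator function $f$ of $U$ on $\Sigma_N\cong\Z/N$, invert, and apply $\sigma\in G$ termwise to the inversion formula; because $f(\zeta)\in\{0,1\}\subset\Q$ and the $a_k$ are rational, this forces $f\circ\sigma=f$. Both arguments isolate the same point — only rationality of the $a_k$, not non-negativity, is used — but your Fourier-inversion route avoids Newton's identities entirely and is arguably more self-contained, and it has the side benefit of directly exhibiting the orbit contributions as Ramanujan sums $c_d(k)$, which makes the dependence of $a_k$ on $(k,N)$ transparent rather than a corollary of the orbit description. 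One small thing worth saying explicitly (you implicitly do this): the inversion identity is an equation in $\Q(\xi_N)$, where $G$ acts, so applying $\sigma$ to both sides is legitimate.
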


  \begin{proof}
    Let $p(x)=\prod_{\zeta\in U} (x-\zeta)\in\C[x]$, i.e. $p(\zeta)=0$ for all
    $\zeta\in U$. Denote $t=|U|$ and $U=\{\zeta_1,\ldots,\zeta_t\}$. For
    $0\leq k\leq t$ let $\sigma_k(x_1,\ldots,x_t)=\sum_{1\leq j_1<\ldots<
    j_k\leq t}x_{j_1}\cdot \ldots \cdot x_{j_k}$ be the elementary symmetric
    polynomials.  Then $p(x)=\sum_{k=0}^t (-1)^{t-k}
    \sigma_{t-k}(\zeta_1,\ldots,\zeta_t) x^k$. Let
    $s_k(x_1,\ldots,x_t)=\sum_{i=1}^t x_i^k$, then we have in particular,
    $a_k=s_k(\zeta_1,\ldots,\zeta_t)$. By the Newton identities, the
    $\sigma_k(x_1,\ldots,x_t)$ can be expressed as sums of powers of the $s_k$
    with rational coefficients, e.g. $\sigma_2 = \frac 12 s_1^2 - \frac12
    s_2$. Thus, we have that the coefficients of $p(x)$, the
    $\sigma_k(\zeta_1,\ldots,\zeta_k)$, are sums of integers with rational
    coefficients, hence $p(x)\in\Q[x]$. (In fact, we have $p(x)\in\Z[x]$,
    since the $\sigma_k(\zeta_1,\ldots,\zeta_t)$ are algebraic integers in
    $\Q$, hence in $\Z$.) Thus we get, that the Galois group $G$
    permutes the roots of $p(x)$, i.e. $U$ consists of orbits of $G$.
  \end{proof}

  \begin{defi}\label{defi:Filter}
    Let $N\in\N$. The set $\D(N)=\{d\in\N\mid\, d\mid N\}$ is the \emph{set of
    all divisors of $N$}. We call a set $\E\subset \D(N)$ a \emph{filter in
    $\D(N)$} if there exist an $e\mid N$ such that $\E=e\D(N/e)=\{d\mid
    N\,\mid \, e\mid d\}$. In this case we write $\E=(e)_N$ or shortly $(e)$
    for the filter in $\D(N)$.
  \end{defi}

  By Lemma \ref{lem:Orbits}, the set $U$ is of the form
  $U=\bigcup_{d\in\E}\{\xi_N^i\mid (N,i)=d\}$ for a set
  $\E\subset\D(N)$. We wish to prove that $\E$ is a filter and hence $U$ is a
  subgroup. For $c\mid N$ we have
  \begin{equation*}
    a_c=\sum_{\zeta\in U}\zeta^c 
    = \sum_{d\in\E} 
    \sum_{(i,N)=d} \xi_N^{ic}.
  \end{equation*}
  
  A straightforward calculation gives the values of the $a_c$.
  \begin{lem}
    For $N\in\N$ and $c\in \D(N)$, we have
    \begin{equation*}
      a_c = \sum_{d\in\E} \frac{\varphi(N/d)}{\varphi(N/(N,dc))}\mu\left(\frac
      N{(N,dc)}\right).
    \end{equation*}
    Here, $\varphi\colon\N\to\N$ is the Euler $\varphi$-function, given by
    $\varphi(\prod_{i=1}^tp_i^{r_i}) = \prod_{i=1}^t (p_i-1)p_i^{r_i-1}$ for
    mutually different prime numbers $p_i$, and $\mu\colon\N\to\{-1,0,1\}$ is
    the Moebius function, defined by $\mu(n)=1$ if $n$ is square-free and has
    an even number of prime factors, $\mu(n)=-1$ if $n$ is square-free and has
    an odd number of prime factors and $\mu(n)=0$ if $n$ has a squared prime
    factor.
  \end{lem}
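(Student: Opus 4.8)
The plan is to compute the contribution of a single Galois orbit in closed form and then sum. By linearity of the sum over $\E$ it suffices to prove that for every $d\mid N$ (in particular every $d\in\E$) and every $c\mid N$,
\[
  \sum_{(i,N)=d}\xi_N^{ic}
  =\frac{\varphi(N/d)}{\varphi\!\left(N/(N,dc)\right)}\,
   \mu\!\left(\frac{N}{(N,dc)}\right),
\]
and then add these identities over $d\in\E$. Throughout, $\xi_m\df\exp(2\pi i/m)$ denotes the primitive $m$-th root of unity, so that $\xi_N^{\,d}=\xi_{N/d}$.

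First I would reparametrise the inner sum. Putting $i=dj$, the residues $i$ modulo $N$ with $(i,N)=d$ are in bijection with the residues $j$ modulo $q\df N/d$ with $(j,q)=1$, and $\xi_N^{ic}=\xi_N^{\,djc}=\xi_q^{jc}$. Hence the inner sum is the Ramanujan sum $c_q(c)\df\sum_{(j,q)=1}\xi_q^{jc}$, whose classical evaluation one may simply cite; but it is equally quick to derive it. Expanding the condition $(j,q)=1$ with the Möbius function,
\[
  c_q(c)=\sum_{e\mid q}\mu(e)\sum_{\substack{1\le j\le q\\ e\mid j}}\xi_q^{jc}
        =\sum_{e\mid q}\mu(e)\sum_{k=1}^{q/e}\xi_{q/e}^{kc},
\]
and the inner geometric sum equals $q/e$ if $(q/e)\mid c$ and vanishes otherwise. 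Substituting $f=q/e$, this collapses to $c_q(c)=\sum_{f\mid(q,c)}f\,\mu(q/f)$.

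Next I would evaluate $\sum_{f\mid(q,c)}f\,\mu(q/f)$. Being the Dirichlet convolution of the two multiplicative functions $f\mapsto f\cdot[f\mid c]$ and $\mu$, it is a multiplicative function of $q$, so I may reduce to a prime power $q=p^{a}$ with $a\ge 1$. The factor $\mu(q/f)$ kills all terms except $f=p^{a}$ and $f=p^{a-1}$, and a one-line case distinction according to the $p$-adic valuation $b$ of $c$ gives the value $\varphi(p^{a})$ when $b\ge a$, the value $-p^{a-1}$ when $b=a-1$, and $0$ when $b\le a-2$. In each of the three cases this equals $\varphi(p^{a})\big/\varphi\!\left(p^{a}/(p^{a},c)\right)\cdot\mu\!\left(p^{a}/(p^{a},c)\right)$, so multiplying over all primes $p$ yields $c_q(c)=\varphi(q)\big/\varphi\!\left(q/(q,c)\right)\cdot\mu\!\left(q/(q,c)\right)$.

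Finally I would translate this back into the form of the statement via the elementary identity $(N,dc)=d\,(N/d,c)$, which holds because $d\mid N$. It gives $q/(q,c)=(N/d)\big/(N/d,c)=N/(N,dc)$ and $\varphi(q)=\varphi(N/d)$, so that $c_q(c)$ is exactly the summand asserted; summing over $d\in\E$ concludes. There is no real obstacle here: the computation is routine, and the only points that need a little care are the divisor bookkeeping (the bijection $i\leftrightarrow j$, the substitution $f=q/e$, the identity $(N,dc)=d(N/d,c)$) and checking that the three prime-power cases in the middle step are all absorbed into the single Ramanujan-sum formula.
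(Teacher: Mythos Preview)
Your proof is correct. Both you and the paper reduce the statement to evaluating the single-orbit sum $\sum_{(i,N)=d}\xi_N^{ic}$, which is the Ramanujan sum $c_{N/d}(c)$, and then sum over $d\in\E$; in that sense the overall shape is the same. The difference lies in how the Ramanujan sum is evaluated. The paper quotes as an ``elementary number theoretical fact'' that $\sum_{(i,N)=1}\xi_N^{\,i}=\mu(N)$, then observes that the $\varphi(N/d)$ exponentials $\xi_{N/d}^{ic}$ run exactly $\varphi(N/d)/\varphi(N/(N,dc))$ times through the primitive $(N/(N,dc))$-th roots of unity, giving the formula directly. You instead expand the coprimality condition by M\"obius inversion, collapse the geometric sums to obtain $\sum_{f\mid(q,c)}f\,\mu(q/f)$, and then evaluate this by multiplicativity and a prime-power case check, finally translating via $(N,dc)=d\,(N/d,c)$. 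Your route is a bit longer but entirely self-contained, whereas the paper's counting argument is shorter but leans on the cited identity; both are standard derivations of the same classical formula.
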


  \begin{proof}
    It is an elementary number theoretical fact, that for an primitive $N$-th
    root of unity $\xi$ we have
      $$\sum_{\substack{i=1 \\ (i,N)=1}}^N \xi^i=\mu(N)$$
    with the Moebius function $\mu$. For $d\mid N$ we have
      $$
        \sum_{\substack{i=1 \\ (i,N)=d}}^N \xi_N^i=
        \sum_{\substack{i=1 \\ (i,N/d)=1}}^{N/d} \xi_{N/d}^i= 
        \mu(N/d)
      $$
    with primitive $(N/d)$-th root of unity $\xi_{N/d}$. For $c\mid N$ we
    get
      $$
      \sum_{\substack{i=1 \\ (i,N)=d}}^N \xi^{ic}=
      \sum_{\substack{i=1 \\ (i,N/d)=1}}^{N/d} \xi_{N/d}^{ic}= 
      \sum_{\substack{i=1 \\ (i,N/d)=1}}^{N/d} \xi_{N/(N,dc)}^{i}= 
      \frac{\varphi(N/d)}{\varphi(N/(N,dc))}\mu\left(\frac N{(N,dc)}\right),
      $$
    since the last sum has $\varphi(N/d)$ summands which contain
    $\varphi(N/(N,dc))$-times all primitive $N/(N,dc)$-th roots of unity
    and their sum gives $\mu(N/(N,dc))$.
  \end{proof}

  Next, we calculate the $a_c$ explicitly in the case $\E$ is a filter in
  $\D(N)$.
  \begin{lem}\label{lem:Values-ac}
    Let $c\mid N$ and $\E=(e)$ be a filter for some $e\mid N$. Then
    \begin{equation*}
      a_c = \begin{cases}
        N/e, & c\in (N/e)_e=(N/e)\D(e), \\
        0,   & \text{else}.
            \end{cases}
    \end{equation*}
    Especially, for a $\E$ being a filter, we have $a_c\geq 0$ for all
    $c\in\D(N)$.
  \end{lem}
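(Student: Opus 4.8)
The plan is to sidestep the general divisor-sum formula from the preceding lemma and instead describe the set $U$ belonging to a filter completely explicitly. Suppose $\E=(e)$ is the filter generated by some $e\mid N$. By Lemma \ref{lem:Orbits} together with Definition \ref{defi:Filter} we have $U=\bigcup_{d\in\E}\{\xi_N^i\mid (N,i)=d\}$, and $d\in\E$ means exactly $e\mid d$. Since $d=(N,i)$ and $e\mid N$, the condition $e\mid(N,i)$ is equivalent to $e\mid i$; hence $U$ consists of all $\xi_N^i$ with $e\mid i$, that is, of $\xi_N^{e},\xi_N^{2e},\dots,\xi_N^{(N/e)e}=1$, which is precisely the group $\Sigma_{N/e}$ of all $(N/e)$-th roots of unity. (This is the same phenomenon that forces a filter to give a subgroup in the ``only if'' direction of Theorem \ref{thm:Roots}.)

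With this identification the first step is the classical orthogonality/geometric-sum computation over a cyclic group: for every $c$,
\[
  a_c=\sum_{\zeta\in\Sigma_{N/e}}\zeta^c=\sum_{j=1}^{N/e}\bigl(\xi_{N/e}^{\,c}\bigr)^j=
  \begin{cases} N/e, & (N/e)\mid c,\\ 0, & \text{otherwise},\end{cases}
\]
the point being that $\xi_{N/e}^{\,c}=1$ iff $(N/e)\mid c$, and that a nontrivial geometric series of $(N/e)$-th roots of unity sums to zero.

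The second, purely arithmetic, step is to rewrite the divisibility condition in the filter notation: for $c\mid N$ we have $(N/e)\mid c$ if and only if $c=(N/e)f$ for some $f\mid e$, i.e. if and only if $c\in(N/e)\D(e)=(N/e)_e$. Plugging this into the displayed value of $a_c$ yields exactly the claimed case distinction, and since $e\mid N$ the quantity $N/e$ is a positive integer, so $a_c\in\{0,N/e\}$ and in particular $a_c\ge0$ for all $c\in\D(N)$.

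I do not expect a genuine obstacle here — as the text anticipates, this is a ``straightforward calculation'' — so the only thing to be careful about is the bookkeeping in the notation $(N/e)_e$: one has to keep in mind that this set is simultaneously $(N/e)\D(e)$ and the filter generated by $N/e$ inside $\D(N)$, so that the conditions ``$c\in(N/e)_e$'' and ``$c\mid N$ and $(N/e)\mid c$'' coincide. An alternative, should one prefer to stay within the framework of the preceding lemma, is to substitute $\E=(e)_N$ directly into its formula; since $\varphi$, $\mu$ and $\gcd$ are all multiplicative along the prime factorization of $N$, the sum $a_c$ factors as a product over primes $p\mid N$ of the corresponding prime-power quantity, each of which is again evaluated by an elementary geometric series — the only subtlety being the boundary contributions where the $p$-adic valuation of $dc$ equals $v_p(N)-1$ and $\mu$ supplies a $-1$. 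This route reproduces the same answer but is more cumbersome, so I would write up the first argument.
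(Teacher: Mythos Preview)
Your argument is correct, and it is genuinely different from the paper's. The paper stays entirely inside the divisor-sum formula
\[
  a_c=\sum_{d\in\E}\frac{\varphi(N/d)}{\varphi(N/(N,dc))}\,\mu\!\left(\frac{N}{(N,dc)}\right),
\]
first reducing the filter $(e)_N$ to the filter $(1)_{N/e}$ by the substitution $N'=N/e$, $d'=d/e$, and then using multiplicativity of $\varphi$ and $\mu$ to factor over primes and evaluate the single prime-power case $N=p^{N_p}$ by hand, splitting according to whether $c_p=N_p$ or $c_p<N_p$. You instead recognise from the outset that the filter $(e)$ forces $U=\Sigma_{N/e}$ and then invoke the orthogonality relation $\sum_{\zeta\in\Sigma_{m}}\zeta^c\in\{m,0\}$; this is shorter and more conceptual, and in fact it is exactly the easy direction of Theorem~\ref{thm:Roots}. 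The trade-off is that the paper's computation never leaves the purely arithmetic framework of divisor sums, which keeps it parallel to the combinatorial Lemma~\ref{lem:comb_Values-ac} and would survive in a setting where one has only the partially ordered set $\D(N)$ and the formal expression for $a_c$, without an underlying set $U$ of roots of unity to fall back on. Your closing paragraph already identifies this alternative route accurately, including the boundary term at $\nu_p(dc)=\nu_p(N)-1$ where $\mu$ contributes $-1$.
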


  \begin{proof}
    We calculate $a_c$ for all $c\in\D(N)$:
    \begin{align*}
      a_c &= \sum_{d\in\E} \frac{\varphi(N/d)}{\varphi(N/(N,dc))}\mu\left(\frac
             N{(N,dc)}\right)\\
          &= \sum_{d'\in\D(N/e)}
             \frac{\varphi(N'e/d'e)}{\varphi(N'e/(N'e,d'ec))}\mu\left(\frac
             {N'e}{(N'e,d'ec)}\right) \tag{$N'=N/e,~ d'=d/e$}\\
          &= \sum_{d'\in\D(N/e)}
             \frac{\varphi(N'/d')}{\varphi(N'/(N',d'c))}\mu\left(\frac
             {N'}{(N',d'c)}\right) \\
    \end{align*}
    Thus, we can assume $\E=\D(N)$ and omit the superscript $'$.
    Since $\varphi$ and $\mu$ are multiplicative functions, we may assume
    $N=p^{N_p}$, $N_p>0$, for a prime $p$ and $d=p^{d_p}$, $c=p^{c_p}$ for
    $d,c\mid N$ and $0\leq d_p,c_p\leq N_p$. Then
    \begin{align*}
      a_c &= \sum_{d\in\D(N)}
      \frac{\varphi(N/d)}{\varphi(N/(N,dc))}\mu\left(\frac N{(N,dc)}\right)\\
          &= \sum_{d\in\D(N)}
      \frac{\varphi(p^{N_p-d_p})}{\varphi(p^{N_p-\min\{d_p+c_p,N_p\}})}
      \mu\left(p^{N_p-\min\{d_p+c_p,N_p\}}\right) \\
      &= \sum_{i=0}^{N_p}
      \frac{\varphi(p^{N_p-i})}{\varphi(p^{N_p-\min\{i+c_p,N_p\}})}
      \mu\left(p^{N_p-\min\{i+c_p,N_p\}}\right)
    \end{align*}
    Since the $\mu$-term equals $0$ if $i+c_p<N_p-1$, is equal to $-1$ if
    $i+c_p=N_p-1$ and $+1$ otherwise, we get
    \begin{align*}
      a_c &= \sum_{i=0}^{N_p-1}(p-1)p^{N_p-i-1} \, + 1\\
          &= (p-1) p^{N_p-1} \frac{p^{-N_p}-1}{p^{-1}-1} \,+1\\
          &= p^{N_p}-1+1 = p^{N_p} = N,
    \end{align*}
    for $c_p=N_p$, and
    \begin{align*}
      a_c &= \sum_{i=N_p-c_p-1}^{N_p-1}
             \frac{\varphi(p^{N_p-i})}{\varphi(p^{N_p-\min\{i+c_p,N_p\}})}
             \mu\left(p^{N_p-\min\{i+c_p,N_p\}}\right) \, +1 \\
          &= -\frac{(p-1)p^{N_p-(N_p-c_p-1)-1}}
             {(p-1)p^{N_p-(N_p-c_p-1+c_p)-1}}
             + \sum_{i=N_p-c_p}^{N_p-1}(p-1)p^{N_p-i-1} \, + 1\\
          &= -p^{c_p} + (p-1) p^{N_p-1}p^{-(N_p-c_p)} \sum_{i=0}^{c_p-1}
             p^{-i} \,+1 \\
             &= -p^{c_p} - (1-p^{c_p}) + 1 = 0,   
    \end{align*}
    for $0\leq c_p<N_p$. Thus, in the general case $\E=(e)$, we have
    $a_c=N'=N/e$ for $c=N/e$ and all multiples, hence the lemma is proven.
  \end{proof}

  We use this result to show that if $\E$ is a small modification of a
  filter, the main assumption $a_c\geq0$ for all $c\in\D(N)$ usually fails to
  be true.
  \begin{lem}\label{lem:modification}~
    \begin{enumerate}[(a)]
      \item Let $\E=(e)\neq\D(N)$ be a filter, but not $(p)$ for $N=p^n$ a
        prime power.
        Then $\E\cup\{1\}$ gives $a_c<0$ for some $c$. 
        If $N=p^n$ for some prime number $p$, $n\in\N$, and $\E=(p)$ a
        filter, then $\E\cup\{1\}$ is a filter as well.
      \item Let $\E=(1)=\D(N)$ and $N$ be not a prime power. Then
        $\E\setminus\{1\}$ gives $a_c<0$ for some $c$. In the case $N=p^n$,
        the set $\E\setminus\{1\}$ is a also a filter, namely $(p)$.
    \end{enumerate}
  \end{lem}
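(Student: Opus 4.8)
The plan is to mimic exactly the strategy of Lemma~\ref{lem:comb_modification} from the combinatorial section, now using the explicit values computed in Lemma~\ref{lem:Values-ac} in place of Lemma~\ref{lem:comb_Values-ac}. In both parts we exhibit a single divisor $c\mid N$ for which the modified set system produces a negative $a_c$. The key observation is that adding or removing the divisor $1$ from $\E$ changes $a_c$ by exactly one summand of the number-theoretic formula, namely the term indexed by $d=1$, which equals $\frac{\varphi(N)}{\varphi(N/(N,c))}\mu\!\left(\frac N{(N,c)}\right)$. So for any modification the plan is: first compute $a_c$ for the unmodified filter $\E=(e)$ using Lemma~\ref{lem:Values-ac}, then add (resp.\ subtract) the $d=1$ term, and finally choose $c$ so that the filter value vanishes while the extra term is a strictly negative real number.

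For part~(a), with $\E=(e)$ a filter, $e\neq 1$, and $N$ not of the form $p^n$ with $e=p$: since $e>1$ we may pick a prime $p\mid e$; then, because $\E\neq\D(N)$ and $N$ is not just a prime power with $e=p$, there is room to choose $c$ cleverly. Concretely I would take $c=N/p$ (or more generally $c$ a divisor such that $p\nmid c$ in the relevant prime, forcing $c\notin (N/e)\D(e)$ so that the filter value is $0$), and then check that the $d=1$ term $\frac{\varphi(N)}{\varphi(N/(N,c))}\mu(N/(N,c))$ equals $-\varphi(p)\cdot(\text{positive integer})<0$ because $N/(N,c)=p$ is prime and $\mu(p)=-1$. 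The degenerate case $N=p^n$, $\E=(p)$ must be excluded precisely because there $\E\cup\{1\}=\D(N)=(1)$ is again a filter, so no negative value can arise; this matches the parenthetical remark in the statement and is verified directly from Lemma~\ref{lem:Values-ac}.

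For part~(b), with $\E=(1)=\D(N)$ and $N$ not a prime power, Lemma~\ref{lem:Values-ac} (applied with $e=1$) gives $a_c=N$ if $c=N$ and $a_c=0$ otherwise. Now $\E\setminus\{1\}$ removes the $d=1$ summand, so the modified sum is $\tilde a_c = a_c - \frac{\varphi(N)}{\varphi(N/(N,c))}\mu\!\left(\frac N{(N,c)}\right)$. I would choose $c$ with $N/(N,c)$ equal to a product of two distinct primes $pq$ — which is possible precisely because $N$ is not a prime power — so that $\mu(N/(N,c))=+1$ and $a_c=0$; then $\tilde a_c=-\frac{\varphi(N)}{\varphi(pq)}<0$. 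When $N=p^n$ one cannot make $N/(N,c)$ have two prime factors, and indeed $\E\setminus\{1\}=(p)$ is a filter, again consistent with the statement.

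The only genuine obstacle is bookkeeping: one must verify in each case that the chosen $c$ really does lie outside the support $(N/e)\D(e)$ of the filter value (so that the unmodified $a_c$ is $0$, not $N/e$), and that $N/(N,dc)$ has the claimed shape (a single prime in part~(a), a product of two distinct primes in part~(b)); both reduce, via multiplicativity of $\varphi$ and $\mu$ and the prime-by-prime reduction already used in the proof of Lemma~\ref{lem:Values-ac}, to an elementary check on exponents. Handling the edge cases (when $e$ itself is a prime, when $N$ is squarefree, etc.) cleanly is the part that needs care, but no new idea beyond Lemma~\ref{lem:Values-ac} is required.
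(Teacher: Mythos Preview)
Your strategy in part~(b) matches the paper exactly: take $c=N/(pq)$ for two distinct primes $p,q\mid N$, so that $a_c=0$ and the removed $d=1$ term equals $\frac{\varphi(N)}{\varphi(pq)}\mu(pq)=\frac{\varphi(N)}{(p-1)(q-1)}>0$, giving $\tilde a_c<0$.

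In part~(a), however, your concrete choice of prime is backwards and this creates a real gap. You pick $p\mid e$ and set $c=N/p$; but then $N/e\mid N/p$ (since $p\mid e$), so $c\in(N/e)\D(e)$ and by Lemma~\ref{lem:Values-ac} the filter value is $a_{N/p}=N/e$, \emph{not} $0$. The modified value becomes $\tilde a_{N/p}=N/e-\varphi(N)/(p-1)$, and this need not be negative: for $N=12$, $e=2$, $p=2$ one gets $\tilde a_6=6-4=2>0$. The paper instead picks a prime $p$ with $p\nmid e$ (when one exists), so that $N/e\nmid N/p$ and hence $a_{N/p}=0$; then $\tilde a_{N/p}=-\varphi(N)/(p-1)<0$ as desired.

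This leaves the case where \emph{every} prime divisor of $N$ divides $e$ (and $N$ is not a prime power), which you have not addressed. Here no choice of $c=N/p$ gives $a_c=0$, and the paper uses a genuinely different estimate: since $e_p\geq 1$ for all $p$ one has $N/e\leq\prod_{p\mid N}p^{N_p-1}$, and then for any prime $q\mid N$,
\[
\tilde a_{N/q}=N/e-\frac{\varphi(N)}{q-1}\leq \prod_{p\mid N}p^{N_p-1}\Bigl(1-\prod_{p\neq q}(p-1)\Bigr),
\]
which is negative once $q$ is chosen so that the remaining product exceeds $1$ (possible because $N$ has at least two prime factors). The prime-power case $N=p^n$, $e=p^k$ with $k>1$ also requires its own short computation ($\tilde a_{p^{n-1}}=p^{n-k}-p^{n-1}<0$). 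So the missing ingredient is precisely this three-way case split on whether some prime avoids $e$, all primes divide $e$, or $N$ is a prime power.
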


  \begin{proof}
    \begin{enumerate}[(a)]
      \item Assume at first, that $N=\prod_{p\mid N}p^{N_p}$, $N_p\geq 1$ for
        all $p$, is not a prime power. If $p\nmid e$ for a prime divisor
        $p\mid N$, we have $N/e \nmid N/p$, and therefore $a_{N/p} =0$ by
        Lemma \ref{lem:Values-ac}.  We calculate the value $\tilde a_{N/p}$
        for $\E\cup\{1\}$:
        \begin{align*}
          \tilde a_{N/p}  &=\sum_{d\in\E\cup\{1\}} 
          \frac{\varphi(N/d)}{\varphi(N/(N,d(N/p)))} \mu(N/(N,d(N/p))) \\
          &= a_{N/p} + 
          \frac{\varphi(N)}{\varphi(p)} \mu(p)
          = 0 \,+ \frac{\varphi(N)}{p-1} (-1) < 0.
        \end{align*}
        Let $e=\prod_{p\mid N} p^{e_p}$ with primes $p$. If $e_p\geq 1$ for
        all $p$ we have $N/e \leq N/(\prod_{p\mid N} p) = \prod_{p\mid N}
        p^{N_p-1}$. We calculate $\tilde a_{N/q}$ for some prime divisor $q$
        and $\E\cup\{1\}$:
        \begin{align*}
          \tilde a_{N/q}  
          &= a_{N/q} + \frac{\varphi(N)}{\varphi(q)} \mu(q) \\
          &\leq \prod_{p\mid N}p^{N_p-1} - \frac{\prod_{p\mid N}
                (p-1)p^{N_p-1}}{q-1} \\
          &= \prod_{p\mid N} p^{N_p-1} \left(1-\prod_{p\neq q} (p-1) \right)
           < 0.
        \end{align*}

        Assume now, $N=p^n$ and $e=p^k$ for $1< k\leq n$, thus $a_c=p^{n-k}$
        for all $c=p^{n-k+l}$ for $0\leq l\leq k$. We calculate $\tilde
        a_{p^{n-1}}$ for $\E\cup\{1\}$:
        \begin{align*}
          \tilde a_{p^{n-1}} =
          a_{p^{n-1}} + \frac{\varphi(p^n)}{\varphi(p)}\mu(p) = p^{n-k} -
          p^{n-1} <0.
        \end{align*}
        If $N=p^n$ and $\E=(p)$, we have $\E\cup\{1\}=\D(N)$, hence it is
        a filter.

      \item If $\E=\D(N)$, it is $a_c=N$ for $c=N$ and $0$ otherwise by Lemma
        \ref{lem:Values-ac}. Since $N$ is not a prime power, there exist
        distinct primes $p,q\mid N$. We calculate $\tilde a_c$ for $c=N/(pq)$
        and $\E\setminus\{1\}$:
        \begin{align*}
          \tilde a_c 
          &= a_c \,- \frac{\varphi(N)}{\varphi(N/(N,N/(pq)))}
          \mu(N/(N,N/(pq))) \\
          &= 0 - \frac{\varphi(N)}{\varphi(pq)}\mu(pq) 
          = - \frac{\varphi(N)}{(p-1)(q-1)} <0.
        \end{align*}
    \end{enumerate}
  \end{proof}

  The main part of the proof of Theorem \ref{thm:Roots} is the
  following claim, which we show by induction:
    Let $N\in\N$, $\D(N)$ the set of all divisors of $N$ and $\E\subset
    \D(N)$. If
    \begin{equation}\label{eq:claim}
      a_c= \sum_{d\in\E} \frac{\varphi(N/d)}{\varphi(N/(cd,N))} \mu(N/(cd,N))
      \geq 0
    \end{equation}
    for all $c\in\D(N)$, then $\E$ is a filter in $\D(N)$ as is Definition
    \ref{defi:Filter}.

    \begin{defi}\label{defi:Ep}
      Let $N\in\N$ and $\E\subset\D(N)$. For a prime factor $p\mid N$ we
      define a new set of divisors of $N/p$, namely
      \begin{equation*}
        \E_p=\{d/p\mid d\in\E,~p\mid d\}\subset\D(N/p).
      \end{equation*}
      For $c\in\D(N/p)$ we denote by $a_c^p$ the corresponding sum over $\E_p$,
      i.e.
      \begin{equation*}
        a_c^p =\sum_{(d/p)\in\E_p}
        \frac{\varphi\left(\frac{N/p}{d/p}\right)}
        {\varphi\left(\frac{N/p}{(N/p,cd/p)}\right)}
        \mu\left(\frac{N/p}{(N/p,cd/p)}\right).
      \end{equation*}
      We will in the following only consider $\E_p$ for all $p$, such that
      there exists any $d\in \E$ with $p|d$, so $\E_p$ is not empty.
    \end{defi}

    We use this as induction step $N/p\mapsto N$. We first wish to prove that
    $a_c\geq 0$ for $\E$ implies $a_c^p\geq 0$ for $\E_p$ and all
    $c\in\D(N/p)$. 
    \begin{lem}\label{lem:acp}
      For any $p|N$ with $\E_p$ we get for all $c\in\D(N/p)$:
      \begin{equation*}
        a_c^p = \begin{cases}
                  a_c, & \text{if }pc\mid N/p,\\
          \frac1p((p-1)a_{c}+a_{pc}) & \text{if }pc\nmid N/p.
                \end{cases}
      \end{equation*}
      In particular, $a_c\geq 0$ for all $c\in\D(N)$
      implies $a_c^p\geq 0$ for all $c\in \D(N/p)$.
    \end{lem}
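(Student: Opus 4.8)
The plan is to prove the displayed formula for $a_c^p$ by the same bookkeeping as in the proof of Lemma~\ref{lem:Values-ac}, using multiplicativity of $\varphi$ and $\mu$ to isolate the prime $p$, and then to read off the non-negativity statement as an immediate corollary.

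First I would fix notation: write $N=p^n N'$ with $p\nmid N'$, and record for each divisor its $p$-part and its prime-to-$p$ part, so $d=p^{d_p}d'$, $c=p^{c_p}c'$ with $d',c'\mid N'$ and $0\le d_p\le n$, while $c\in\D(N/p)$ forces $0\le c_p\le n-1$. Since $\varphi$ and $\mu$ are multiplicative and the relevant gcd and quotient operations split across the prime $p$ and the part coprime to $p$, every summand $\frac{\varphi(N/d)}{\varphi(N/(cd,N))}\mu(N/(cd,N))$ factors as $A_n(c_p,d_p)\cdot h(c',d')$, where
$$A_n(i,j)=\frac{\varphi(p^{\,n-j})}{\varphi(p^{\,n-\min\{i+j,n\}})}\,\mu\!\left(p^{\,n-\min\{i+j,n\}}\right)$$
depends only on the $p$-adic valuations and $h(c',d')$ is the analogous expression built from $N'$, involving neither $p$ nor $n$. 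The injection $d\mapsto d/p$ identifies $\{d\in\E\mid p\mid d\}$ with $\E_p$, preserving the prime-to-$p$ part and lowering the $p$-valuation by one, so $a_c^p=\sum_{d\in\E,\;p\mid d}A_{n-1}(c_p,d_p-1)\,h(c',d')$.

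The key elementary identity is $A_{n-1}(i,j-1)=A_n(i,j)$ for all $j\ge 1$: indeed $(n-1)-\min\{i+j-1,n-1\}=n-\min\{i+j,n\}$, and the numerator exponents both equal $n-j$, so the $\varphi$- and $\mu$-factors match. Hence $a_c^p=\sum_{d\in\E,\;p\mid d}A_n(c_p,d_p)\,h(c',d')$, i.e.\ $a_c^p$ is precisely the part of $a_c=\sum_{d\in\E}A_n(c_p,d_p)h(c',d')$ coming from the $p$-divisible $d$, and it remains to control the terms with $p\nmid d$ (i.e.\ $d_p=0$). Here the case distinction enters. If $pc\mid N/p$ then $c_p\le n-2$, so $n-c_p\ge 2$, so $\mu(p^{\,n-c_p})=0$ and $A_n(c_p,0)=0$; thus the $p\nmid d$ terms of $a_c$ vanish and $a_c^p=a_c$. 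If $pc\nmid N/p$ then $c_p=n-1$, and one computes $A_n(n-1,0)=-p^{n-1}$, $A_n(n,0)=\varphi(p^n)=(p-1)p^{n-1}$, and $A_n(n-1,j)=A_n(n,j)=\varphi(p^{\,n-j})$ for $j\ge 1$. Writing $S:=\sum_{d\in\E,\;p\nmid d}h(c',d')$ and noting $pc=p^nc'\mid N$ so that $a_{pc}$ is legitimate, this yields $a_c=a_c^p-p^{n-1}S$ and $a_{pc}=a_c^p+(p-1)p^{n-1}S$, whence $(p-1)a_c+a_{pc}=p\,a_c^p$, which is the claimed formula.

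Finally, the non-negativity assertion is immediate: if $pc\mid N/p$ then $a_c^p=a_c\ge 0$, and otherwise $a_c^p=\frac1p((p-1)a_c+a_{pc})$ is a non-negative linear combination of $a_c\ge 0$ and $a_{pc}\ge 0$. I expect the only real obstacle to be organizational, namely keeping the $p$-adic valuations and the boundary cases $\min\{i+j,n\}=n$ straight, and in particular seeing that it is exactly the vanishing $\mu(p^{\,n-c_p})=0$ for $n-c_p\ge 2$ that makes the two branches of the formula consistent; no idea beyond the multiplicative splitting and the identity $A_{n-1}(i,j-1)=A_n(i,j)$ is needed.
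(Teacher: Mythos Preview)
Your proof is correct and follows essentially the same route as the paper: both arguments split the sum defining $a_c$ according to whether $p\mid d$ or $p\nmid d$, identify the $p\mid d$ part with $a_c^p$, observe that the $p\nmid d$ part vanishes when $\nu_p(c)\le n-2$ because the Moebius factor is zero, and in the remaining case $\nu_p(c)=n-1$ express both $a_c$ and $a_{pc}$ as $a_c^p$ plus a common auxiliary sum (your $S$, the paper's $X(c)$) with coefficients $-p^{n-1}$ resp.\ $(p-1)p^{n-1}$, so that the combination $(p-1)a_c+a_{pc}$ eliminates it. Your packaging via the factorization $A_n(c_p,d_p)\,h(c',d')$ and the shift identity $A_{n-1}(i,j-1)=A_n(i,j)$ is a slightly cleaner way to organize the same computation, but the mathematical content is identical.
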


  \begin{proof}
    For $\E$ and $p\in\D(N)$ such that $p$ divides at least one $d\in \E$, the
    set $\E_p$ is non-empty. We calculate the value of $a_c^p$ for all 
    $c\in\D(N/p)$:
    \begin{align*}
      a_c^p &=\sum_{(d/p)\in\E_p}
      \frac{\varphi\left(\frac{N/p}{d/p}\right)}
      {\varphi\left(\frac{N/p}{(N/p,cd/p)}\right)}
      \mu\left(\frac{N/p}{(N/p,cd/p)}\right) \\
      &=\sum_{\substack{d\in\E \\ p\mid d}}
      \frac{\varphi\left(\frac{N}{d}\right)}
      {\varphi\left(\frac{N}{(N,cd)}\right)}
      \mu\left(\frac{N}{(N,cd)}\right) \\
      &= a_c - 
      \underset{=:a_c^{p'}}{\underbrace{
      \sum_{\substack{d\in\E \\ p\nmid d}}
      \frac{\varphi\left(\frac{N}{d}\right)}
      {\varphi\left(\frac{N}{(N,cd)}\right)}
      \mu\left(\frac{N}{(N,cd)}\right).}}
    \end{align*}
    
    For $n\in\N$ and prime number $p$ let $\nu_p(n)\geq 0$ the maximal
    $p$-part of $n$, i.e. $p^{\nu_p(n)}\mid n$ and $p^{\nu_p(n)+1}\nmid n$.
    Let $k=\nu_p(N)$. If $\nu_p(c)\leq k-2$, i.e. in particular it is $k\geq
    2$, then $\nu_p(N/(cd,N)) \geq 2$ for $d\in\E$, $p\nmid d$. Thus, we have
    $\mu(N/(cd,N))=0$ and $a_c^p=a_c$ in the case $pc\mid (N/p)$. For
    $\nu_p(c)=k-1$ we have
    \begin{align*}
      a_c^{p'} 
      &= \sum_{\substack{d\in\E \\ p\nmid d}}
      \frac{\varphi\left(\frac{N}{d}\right)}
      {\varphi\left(\frac{N}{(N,cd)}\right)}
      \mu\left(\frac{N}{(N,cd)}\right) \\
      &= \sum_{\substack{d\in\E \\ p\nmid d}}
      \frac{\varphi(p^k)\varphi\left(\frac{N/p^k}{d}\right)}
      {\varphi(p)\varphi\left(\frac{N/p^k}{(N/p^k,dc/p^{k-1})}\right)}
      \underset{=-1}{\underbrace{\mu(p)}}
      \mu\left(\frac{N/p^k}{(N/p^k,dc/p^{k-1})}\right) \\
      &= -p^{k-1} 
      \underset{=:X(c)}{\underbrace{
          \sum_{\substack{d\in\E \\ p\nmid d}}
      \frac{\varphi\left(\frac{N/p^k}{d}\right)}
      {\varphi\left(\frac{N/p^k}{(N/p^k,dc/p^{k-1})}\right)}
      \mu\left(\frac{N/p^k}{(N/p^k,dc/p^{k-1})}\right).}}
    \end{align*}
    Assume now $\nu_p(c)=\nu_p(N)=k$. We write $N'=N/p^k$ and $c'=c/p^k$, then
    we get
    \begin{align*}
      a_c &= \sum_{d\in\E}
      \frac{\varphi\left(\frac{N'p^k} {d}\right)}
      {\varphi\left(\frac{N'p^k} {(N'p^k,c'p^kd)} \right)}
      \mu\left(\frac{N'p^k} {(N'p^k,c'p^kd)} \right) \\
      &= \sum_{\substack{d\in\E\\p\nmid d}}
      \frac{\varphi(p^k)\varphi\left(\frac{N'} {d}\right)}
      {\varphi\left(\frac{N'} {(N',c'd)} \right)}
      \mu\left(\frac{N'} {(N',c'd)} \right) 
      + \underset{=a_{(c'p^{k-1})}^p}{\underbrace{
          \sum_{\substack{d\in\E\\p\mid d}}
      \frac{\varphi\left(\frac{N'} {d}\right)}
      {\varphi\left(\frac{N'} {(N',c'd)} \right)}
      \mu\left(\frac{N'} {(N',c'd)} \right)}} \\
      &= (p-1)p^{k-1} X(c) + a_{(c'p^{k-1})}^p.
    \end{align*}
    We combine the two expressions for $a_c$, $c=c'p^k$.
    Then, $c/p=c'p^{k-1}\mid N/p$ and $X(c'p^{k-1})=X(c'p^{k})=X(c)$. Since $x
    a_{(c'p^{k-1})} + y a_{(c'p^k)}\geq 0$ for $x,y\geq 0$ we get from
    \begin{align*}
      x a_{(c'p^{k-1})} + y a_{(c'p^k)} 
      &= x\left( a_{(c'p^{k-1})}^p - p^{k-1} X(c)\right)
      + y\left( (p-1)p^{k-1} X(c) + a_{(c'p^{k-1})}^p\right) \\
      &= p a_{(c'p^{k-1})}^p, \tag{$x=p-1,~ y=1$}
    \end{align*}
    and this proves the lemma.
  \end{proof}

  We now conclude by induction that $\E$ is a filter if all possible reductions
  $\E_p$, $p\mid N$, are filters. Under this assumption, it follows that no
  $\E_p$ is empty: Let $p\mid N$ such that $p\mid d$ for some $d\in \E$, then
  $\E_p$ is not empty, hence equals $(e_p)$ for some $e_p$. Since $\E_p$ is a
  filter, we have $N/p\in\E_p$, and hence $N=p\cdot N/p\in\E$. As induction
  step, we use the following lemma.

  \begin{lem}\label{lem:induction}
    Let $\E\subset\D(N)$ and for all $p\mid N$ the set $\E_p$, defined as in
    Definition \ref{defi:Ep}, a filter, namely $(e_p)=(e_p)_{N/p}$ for some
    $e_p\mid N/p$.  Then either there exist a prime $p\mid N$ with $p\mid e_q$
    for all $p\neq q$ or it is $e_p=1$ for all $p$.\\ In the first case we
    have $\E=(pe_p)$ or $\E=(pe_p)\cup \{1\}$. In the second case we have
    $\E=(1)=\D(N)$ or $\E=(1)\setminus \{1\}$.
  \end{lem}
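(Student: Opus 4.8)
The statement is the number-theoretic analogue of Lemma~\ref{lem:comb_induction}, and the plan is to mimic that combinatorial argument almost verbatim, translating ``$D-p\in\E_p$'' into ``$d/p\in\E_p$'' and ``$D\supset A$'' into ``$e\mid d$''. The only structural difference is that in $\D(N/p)$ one cannot simply ``remove $p$'' from every element; instead one uses the defining property of $\E_p$, that $d/p\in\E_p$ if and only if $p\mid d$ and $d\in\E$. I would keep in mind the dictionary: a filter $(e_p)$ in $\D(N/p)$ is upward closed under divisibility, and its generator $e_p$ is the unique minimal element; the hypothesis is that every $\E_p$ (for $p\mid N$ dividing some element of $\E$, which by the discussion preceding the lemma means every $p\mid N$ once we are in the inductive situation) is such a filter.

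\textbf{Step 1 (the prime $p$ lies in almost every $e_q$).}
First I would prove: if $e_{q'}\neq 1$ for some prime $q'\mid N$, and $p$ is a prime with $p\mid e_{q'}$, then $p\mid e_q$ for every prime $q\neq p$ dividing $N$. This is the core dichotomy. The argument is by contradiction exactly as in Lemma~\ref{lem:comb_induction}: suppose $p\nmid e_q$. Since $\E_q=(e_q)$ is a filter, $q'e_q\in\E_q$ (it is a multiple of $e_q$ and divides $N/q$; one checks $q'\mid N/q$ using $q'\neq q$), hence $qq'e_q\in\E$ by the definition of $\E_q$. Then $qe_q$ is obtained from $qq'e_q\in\E$ by dividing out $q'$... more carefully: $qq'e_q\in\E$ and $q'\mid qq'e_q$, so $qe_q=(qq'e_q)/q'\in\E_{q'}$. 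Since $\E_{q'}=(e_{q'})$ is a filter, $e_{q'}\mid qe_q$; but $p\mid e_{q'}$ and $p\nmid q$ (as $p\neq q$) and $p\nmid e_q$, so $p\nmid qe_q$, a contradiction. (One small care point: the indices $p,q,q'$ must be pairwise distinct where needed; if $q'=p$ it is vacuous, and $q\neq p$, $q\neq q'$ can be arranged.) Thus either such a $p$ exists with $p\mid e_q$ for all $q\neq p$, or all $e_q=1$.

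\textbf{Step 2 (identify $\E$ in the first case).}
Assume $p$ is as in Step~1, so $p\mid e_q$ for all primes $q\neq p$, $q\mid N$. Set $e:=pe_p$. I claim $(e)\subset\E\subset(e)\cup\{1\}$.
First, $(e)\subset\E$: if $e\mid d\mid N$, then since $e=pe_p$ we have $p\mid d$, so $d/p$ is defined and $e_p\mid d/p$, i.e.\ $d/p\in\E_p=(e_p)$, whence $d\in\E$ by definition of $\E_p$.
Conversely let $d\in\E$. If $p\mid d$, then $d/p\in\E_p=(e_p)$, so $e_p\mid d/p$, so $pe_p=e\mid d$. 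If $p\nmid d$, I claim $d=1$: otherwise pick a prime $q\mid d$; necessarily $q\neq p$; then $d/q\in\E_q=(e_q)$, so $e_q\mid d/q\mid d$, but $p\mid e_q$ forces $p\mid d$, contradiction. Hence $\E=(e)$ or $\E=(e)\cup\{1\}$, as claimed (the latter only being possible if $1\notin(e)$, i.e.\ $e\neq 1$, which holds since $p\mid e$).

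\textbf{Step 3 (the second case).}
Now suppose $e_q=1$ for all primes $q\mid N$, so every $\E_q=\D(N/q)$. Let $d\mid N$ with $d\neq 1$; pick a prime $q\mid d$; then $d/q\in\D(N/q)=\E_q$, hence $d\in\E$. Thus every divisor of $N$ except possibly $1$ lies in $\E$, i.e.\ $\E=\D(N)=(1)$ or $\E=(1)\setminus\{1\}$. This completes the proof.

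\textbf{Main obstacle.}
There is no deep difficulty; the one thing requiring genuine care is the bookkeeping in Step~1 around which primes may coincide and the repeated use of the two directions of ``$d/p\in\E_p \iff p\mid d$ and $d\in\E$'' (one direction being a \emph{definition}, the other its converse), together with the divisibility checks ``$q'\mid N/q$'' etc.\ that make the filter-membership assertions legitimate. One should also note explicitly, as the text preceding the lemma already does, that under the running hypothesis no $\E_p$ is empty, so $e_p$ is well defined for every $p\mid N$.
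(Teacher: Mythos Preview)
Your proof is correct and mirrors the paper's own argument essentially line for line: the same contradiction in Step~1 (via $q'e_q\in\E_q$, hence $qq'e_q\in\E$, hence $qe_q\in\E_{q'}$, hence $e_{q'}\mid qe_q$), the same pair of inclusions $(pe_p)\subset\E\subset(pe_p)\cup\{1\}$ in Step~2, and the same observation $\bigcup_{q}(q)=\D(N)\setminus\{1\}$ in Step~3. The only difference is cosmetic---you spell out the bookkeeping on which primes may coincide and the two directions of the defining property of $\E_p$, both of which the paper leaves implicit.
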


  \begin{proof}
    Assume, there exist $q'$ with $e_{q'}\neq 1$ and $p\mid e_{q'}$. Then
    $p\mid e_q$ for all $q\neq p$. We prove this by contradiction, then if
    $p\nmid e_q$ for some $q$, then $q'e_q\in\E_q$ (since $\E_q$ is a filter)
    and hence $qq'e_q\in \E$ (by definition of $\E_q$). Then $qe_q\in\E_{q'}$
    and $e_{q'}\mid qe_q$, hence a contradiction to $p\nmid e_q$. This proves
    the first part of the lemma.

    We now prove the consequences in the two cases. Firstly, we assume it
    exist $p$ with $p\mid e_q$ for all $q\neq p$. Let $x\in(pe_p)$, then
    $e_p\mid x/p$, and since $\E_p$ is a filter, $x/p\in\E_p$. Thus we have
    $x=p\cdot x/p\in\E$, i.e. $(pe_p)\subset\E$. Let now $x\in \E$. If $p\mid
    x$ we have $x/p\in\E_p$, hence $e_p\mid x/p$ and therefore $pe_p\mid x$. If
    $p\nmid x$, then $x=1$ or it exists $q$ with $q\mid x$. In this case is
    $x/q\in\E_q$ and $e_q\mid x/q$, which is a contradiction to $p\nmid x$.
    This proves $(pe_p)=\E\setminus\{1\}$ (which may be equal to $\E$).\\
    In the case $e_p=1$ for all $p$, we have $(p)\subset\E$ for all $p$. Since
    it is $\bigcup_{p\in\D(N)}(p)=\D(N)\setminus\{1\}$, this proves the
    assertion.
  \end{proof}

  We can now conclude the proof of the claim of \eqref{eq:claim}. Let $\E$
  such that 
    \begin{equation*}
      a_c= \sum_{d\in\E} \frac{\varphi(N/d)}{\varphi(N/(cd,N))} \mu(N/(cd,N))
      \geq 0
    \end{equation*}
  for all $c\in\D(N)$, then $a_c^p\geq 0$ for all $\E_p$ and $c\in\D(N/p)$ by
  Lemma \ref{lem:acp}. By induction, all $\E_p$ are filters, namely $(e_p)$
  for some $e_p\mid N/p$. Then, by Lemma \ref{lem:induction}, we have that
  $\E$ is a filter $(e)$ for some $e\mid N$ or $(e)\cup \{1\}$ for some $e\neq
  1$ or $(1)\setminus\{1\}$. By Lemma \ref{lem:modification}, the last cases
  are only possible for $N=p^n$ and $e=p$. In this cases $\E$ is a filter as
  well.  This proves the claim \eqref{eq:claim} and hence concludes the proof
  of Theorem \ref{thm:Roots}.

\section{Fourier pairs of $\{0,1\}$-matrices}\label{sec:Fourier}
  In the following, we consider idempotents $\eps/N$ of the group algebra
  $\C[\Z_N\times\Z_N]$, i.e. 
  \begin{equation}\label{eq:eps-idem}
    \eps/N \cdot \eps/N =\eps/N.
  \end{equation}
  If we introduce a basis $\{g^i\otimes g^j \mid 0\leq i,j<N\}$, with
  $\car{g}=\Z_N$, and write $\eps=\sum_{i,j} \eps_{ij} g^i\otimes g^j$, 
  equation \eqref{eq:eps-idem} translates to
  \begin{equation*}
    \frac 1{N^2} \sum_{i',i''}\sum_{j',j''} \eps_{i'j'} \eps_{i''j''}
    (g^{i'}\otimes g^{j'})(g^{i''}\otimes g^{j''})
    = \frac 1N \sum_{i,j} \eps_{ij} g^i\otimes g^j,
  \end{equation*}
  and by comparing coefficients, we get
  \begin{equation}\label{eq:eps-sum}
    \frac 1{N^2} \sum_{i'+i''=i}\sum_{j'+j''=j} \eps_{i'j'} \eps_{i''j''}
    = \frac 1N \eps_{ij}.
  \end{equation}
  Let $\xi=\xi_N$ be a primitive $N$-th root of unity and $\{e_k=1/N
  \sum_{r=0}^N \xi^{kr}g^r \mid 0\leq k< N\}$ be the set of primitive
  idempotents of the group algebra $\C[\Z_N]$. Then $\{e_k\otimes e_l\}$ is the
  set of primitive idempotents of $\C[\Z_N\times\Z_N]$ and we can express
  $\eps/N$ as sum of these primitive idempotents:
  $\eps/N=\sum_{k,l}\bar\eps_{kl} e_k\otimes e_l$ with
  $\bar\eps_{kl}\in\{0,1\}$ for all $0\leq k,l<N$. This leads to 
  \begin{equation}\label{eq:Fourier}
    \eps_{ij} = \frac 1{N} \sum_{k,l}\bar\eps_{kl} \xi^{ik+jl},
  \end{equation}
  which means, that the matrix $\eps=(\eps_{ij})_{ij}$ is the discrete
  Fourier transformation of the $\{0,1\}$-matrix $\bar\eps$. This
  considerations lead to the following problem.
  \begin{prob}
    We wish to determine all idempotents $\eps/N$ of $\C[\Z_N\times \Z_N]$,
    such that $\eps=(\eps_{ij})_{ij}$ is $\{0,1\}$-matrix, or equivalent, all
    Fourier pairs of $\{0,1\}$-matrices $\eps$ and $\bar\eps$.
  \end{prob}

  \begin{expl}
    We consider some examples of Fourier transformed matrices, where $\eps$ is
    not necessarily a $\{0,1\}$-matrix.
    \begin{enumerate}[(i)]
      \item Let $\bar\eps$ the matrix with $\bar\eps_{00}=1$ and
        $\bar\eps_{kl}=0$ otherwise. Then $\eps_{ij}=1/N$ for all $j,j$.
      \item Let $\bar\eps$ the matrix with $\bar\eps_{0l}=1$ for all $l$ and
        $\bar\eps_{kl}=0$ otherwise. Then $\eps_{i0}=1$ for all $i$ and
        $\eps_{ij}=0$ otherwise.
      \item Let $\bar\eps$ the matrix with $\bar\eps_{kk}=1$ for all $k$ and
        $\bar\eps_{kl}=0$ otherwise. Then $\eps_{ij}=1$ for all $i,j$ with
        $i+j\equiv 0\mod N$ and $\eps_{ij}=0$ otherwise. We give the matrices
        explicitly for $N=4$:
        \begin{equation*}
          \bar\eps=
          \begin{pmatrix} 
            1&0&0&0 \\
            0&1&0&0 \\
            0&0&1&0 \\
            0&0&0&1
          \end{pmatrix}
          \quad \longrightarrow \quad
          \eps=
          \begin{pmatrix} 
            1&0&0&0 \\
            0&0&0&1 \\
            0&0&1&0 \\
            0&1&0&0
          \end{pmatrix}.
        \end{equation*}
    \end{enumerate}
  \end{expl}

  The following theorem completely solves this problem, relying heavily on
  our main Theorem \ref{thm:Roots}:
  \begin{thm}\label{thm:eps-Solutions}
    All idempotents $\eps/N$ of the group algebra $\C[\Z_N\times\Z_N]$ with
    $\eps_{ij} \in\{0,1\}$, or equivalently all discrete Fourier pairs
    $\eps,\bar\eps$ of $\{0,1\}$-matrices are either
    \begin{equation}\label{eq:eps}
      \eps_{ij} = \delta_{(\frac Nd\mid i)} \delta_{(d\mid j-t\frac i{N/d})},
    \end{equation}
    for a unique $d\mid N$ and $0\leq t\leq d-1$ or
    they are trivial $\eps=\bar\eps=0$. 
  \end{thm}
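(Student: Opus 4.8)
The plan is to reformulate everything in terms of the supports of the two $\{0,1\}$-matrices and to show that $\eps$ must be the indicator function of a subgroup of $\Z_N\times\Z_N$ of order $N$; after that the statement reduces to the (elementary) enumeration of such subgroups, which I will match with \eqref{eq:eps}. Set $T=\{(i,j):\eps_{ij}=1\}$ and $S=\{(k,l):\bar\eps_{kl}=1\}$, so that $\eps=\one_T$ and, by \eqref{eq:Fourier}, $N\eps_{ij}=\sum_{(k,l)\in S}\xi^{ik+jl}$, with the dual relation $N\bar\eps_{kl}=\sum_{(i,j)\in T}\xi^{-(ik+jl)}$. Evaluating at $(0,0)$ gives $\eps_{00}=|S|/N$ and $\bar\eps_{00}=|T|/N$, hence $|S|,|T|\in\{0,N\}$; if $S=\varnothing$ then $\bar\eps=0$ and $\eps=0$, which is the trivial pair, so from now on I assume $\eps\neq 0$ and $|S|=|T|=N$. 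The work then splits into three steps: use Theorem \ref{thm:Roots} to control the marginals of $\eps$, use idempotency to upgrade this to ``$T$ is a subgroup'', and finally enumerate the subgroups.

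The first step is the link to the main theorem. For fixed $i$ the $i$-th row sum $\sum_{j}\eps_{ij}$ is a non-negative integer, and by \eqref{eq:Fourier} together with $\sum_{j}\xi^{jl}=N\delta_{l,0}$ it equals $\sum_{k\in R}\xi^{ik}$, where $R:=\{k\in\Z_N:\bar\eps_{k0}=1\}$. Thus, with $U_R:=\{\xi^{k}\mid k\in R\}$ a non-empty finite set of roots of unity, all power sums $\sum_{\zeta\in U_R}\zeta^{i}$ are non-negative integers, so Theorem \ref{thm:Roots} forces $U_R$ to be a group of roots of unity; equivalently $R=d\Z_N$, the unique subgroup of $\Z_N$ of order $N/d$, for some divisor $d\mid N$. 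Feeding this back, $\sum_j\eps_{ij}=\sum_{k\in d\Z_N}\xi^{ik}$ equals $N/d$ when $\tfrac Nd\mid i$ and $0$ otherwise, so $T\subseteq\tfrac Nd\Z_N\times\Z_N$, there are exactly $d$ non-zero rows and each has weight $N/d$. The same mechanism applied to column sums, and more generally to sums of $\eps$ over cosets of the annihilator of a cyclic order-$N$ subgroup, yields analogous restrictions on $S$; this is the part of the argument that genuinely rests on the main theorem.

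The second step upgrades this marginal information using idempotency. In convolution form, \eqref{eq:eps-sum} is exactly $\one_T\ast\one_T=N\,\one_T$ in $\C[\Z_N\times\Z_N]$, i.e. $|T\cap(x-T)|=N$ for $x\in T$ and $|T\cap(x-T)|=0$ for $x\notin T$. Summing over $x$ recovers $|T|^2=N|T|$, and for $x\in T$ the equality $|T\cap(x-T)|=N=|T|=|x-T|$ forces $x-T=T$. Taking any $x_0\in T$ gives $0=x_0-x_0\in T$; taking $x=0$ gives $-T=T$; and together these give $x+T=T$ for all $x\in T$. Hence $T$ is a subgroup of $\Z_N\times\Z_N$ of order $N$, and dually $S=T^{\perp}$ is one too. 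Conversely, for any subgroup $T$ one has $\one_T\ast\one_T=|T|\one_T$, so $\eps=\one_T$ with $|T|=N$ automatically satisfies \eqref{eq:eps-sum} and has $\bar\eps=\one_{T^{\perp}}$ a $\{0,1\}$-matrix; thus the problem is now precisely the classification of subgroups of $\Z_N\times\Z_N$ of order $N$.

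Finally I would carry out this classification and check it reproduces \eqref{eq:eps}. Given such a $T$, put $d:=|\pi_1(T)|$ where $\pi_1$ is the first projection; then $\pi_1(T)=\tfrac Nd\Z_N$ with $d\mid N$, and $\ker(\pi_1|_T)$, having order $N/d$, equals $\{0\}\times d\Z_N$. Choosing any preimage $(\tfrac Nd,s)\in T$ of the generator $\tfrac Nd$ and subtracting the appropriate multiple of $(0,d)\in T$ produces a unique $t\in\{0,\dots,d-1\}$ with $(\tfrac Nd,t)\in T$ (uniqueness: $(\tfrac Nd,t),(\tfrac Nd,t')\in T$ force $(0,t-t')\in T$, hence $d\mid t-t'$). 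The subgroup generated by $(\tfrac Nd,t)$ and $(0,d)$ has order $d\cdot(N/d)=N$ and lies in $T$, so equals $T$, and unwinding this gives
\[
  T=\bigl\{(i,j)\ :\ \tfrac Nd\mid i\ \text{ and }\ d\mid j-t\tfrac{i}{N/d}\bigr\},
\]
which is the support of the matrix in \eqref{eq:eps}. Conversely every pair $d\mid N$, $0\le t\le d-1$ makes this set a subgroup of order $N$ (closure being the identity $t(m+m')\equiv tm+tm' \bmod d$), and distinct pairs give distinct subgroups, since $d$ is recovered as $|\pi_1(T)|$ and $t$ as the unique element of $\{0,\dots,d-1\}$ with $(\tfrac Nd,t)\in T$; this yields the uniqueness asserted in the theorem. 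I expect the main obstacle to be the structural middle steps — isolating the row sums as power sums of a genuine set of roots of unity so that Theorem \ref{thm:Roots} applies, and then recognizing that idempotency converts the resulting grid/marginal data into honest group structure; by comparison, the subgroup enumeration is routine bookkeeping.
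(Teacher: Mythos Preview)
Your argument is correct, and it is genuinely different from --- and considerably shorter than --- the paper's. The paper applies Theorem \ref{thm:Roots} four times (to row and column sums of $\eps$ and $\bar\eps$) to pin down the first row and column of each matrix, extracts the divisors $d,d',\bar d,\bar d'$ with $N\mid dd'$, and then runs an induction on $dd'$ via a shift operation $\eps\mapsto\eps^{[t]}$ to reduce to the unshifted case $dd'=N$. Your route bypasses all of this: the convolution identity $\one_T*\one_T=N\,\one_T$ together with $|T|=N$ already forces $x-T=T$ for every $x\in T$, hence $T$ is a subgroup, and the rest is the routine enumeration of order-$N$ subgroups of $\Z_N\times\Z_N$. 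In fact your Step~2 is self-contained --- it uses only $|T|=N$, which you recover independently from $|T|^2=N|T|$ --- so your Step~1 and the appeal to Theorem \ref{thm:Roots} are not actually needed in your logical chain (contrary to what your closing paragraph suggests). What the paper's approach buys is that it exhibits Theorem \ref{thm:eps-Solutions} as a genuine application of the roots-of-unity theorem, which is the narrative purpose of the section; what your approach buys is a cleaner, essentially one-line structural proof that the support must be a subgroup, with no induction and no number theory beyond the subgroup lattice of $\Z_N$.
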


  Before we proceed to the proof of the theorem, we give another Example.
  \begin{expl}
    Let $N=12$, $d=3$ and $t=2$, then $\eps$ as in \eqref{eq:eps} is given by
    \begin{equation*}
      \bordermatrix{
        {}& 0&  & t&  & d&  &  &  &  &  &  &   \cr
      ~~0 & 1& 0& 0& 0& 1& 0& 0& 0& 1& 0& 0& 0 \cr
          & 0& .& .& .& .& .& .& .& .& .& .& . \cr
          & 0& .& .& .& .& .& .& .& .& .& .& . \cr
      N/d & 0& 0& 1& 0& 0& 0& 1& 0& 0& 0& 1& 0 \cr
          & 0& .& .& .& .& .& .& .& .& .& .& . \cr
          & 0& .& .& .& .& .& .& .& .& .& .& . \cr
~~\bar d  & 1& 0& 0& 0& 1& 0& 0& 0& 1& 0& 0& 0 \cr
          & 0& .& .& .& .& .& .& .& .& .& .& . \cr
          & 0& .& .& .& .& .& .& .& .& .& .& . \cr
          & 0& 0& 1& 0& 0& 0& 1& 0& 0& 0& 1& 0 \cr
          & 0& .& .& .& .& .& .& .& .& .& .& . \cr
          & 0& .& .& .& .& .& .& .& .& .& .& . \cr
      }.
    \end{equation*}
  \end{expl}

  \begin{proof}
    By the Fourier transformation \eqref{eq:Fourier}, we have $\eps_{00} =
    \frac 1N\sum_{k,l}\bar\eps$.  Since $\eps$ and $\bar\eps$ are
    $\{0,1\}$-matrices, this gives either $\bar\eps_{kl}=0$ for all $k,l$ or
    $\sum_{k,l}\bar\eps_{kl}=N$. In the first case we have $\eps=0$ as well.
    Applying the same argument to the dual Fourier transformation, 
    \begin{equation}\label{eq:dualFourier}
      \bar\eps_{kl} = 1/N\sum_{ij} \eps_{ij} \xi^{-(ik+jl)},
    \end{equation}
    we get $\sum_{i,j}\eps_{ij}=N$ in the second case.

    Assume in the following $\eps\neq 0$. We now calculate the row-, resp.
    column-sums of the matrices $\eps$ and $\bar\eps$. Let $a_k$ the $k$-th
    row sum of $\eps$ and $a'_l$ the $l$-th column-sum for $\eps$, and $\bar
    a_k$, $\bar a'_l$ the according sums of $\bar\eps$. Then
    \begin{equation}
       a_k = \sum_j \eps_{kj} =\frac 1N \sum_{l,i,j} \bar\eps_{ij} \xi^{ik+jl}
       =\frac 1N \sum_i \xi^{ik} \sum_j \bar\eps_{ij}
       \underset{=N\delta_{(j=0)}}{\underbrace{\sum_l \xi^{jl}}}
       = \sum_i \xi^{ik}\bar\eps_{i0}
    \end{equation}
    Since the row sum $a_k$ is a non-negative integer, we get by Theorem
    \ref{thm:Roots}, that $\{i\mid \bar\eps_{i0}=1\}$ is a subgroup of $\Z_N$,
    thus it exist $d\mid N$ with $\bar\eps_{i0} = \delta_{(d\mid i)}$.
    Analogously, we get by calculating the column sum $a'_l$ that $d'\mid N$
    exist, such that $\bar\eps_{0j} = \delta_{(d'\mid j)}$. As a consequence,
    we get $a_k=N/d$ for $k$ being an $N/d$-multiple and $a'_l=N/d'$ for $l$
    being an $N/d'$-multiple, and the other row, resp. column sums being $0$.
    We now calculate the row and column sums of $\bar\eps$ using the dual
    transformation \eqref{eq:dualFourier}. 
     This gives, again by application of Theorem
    \ref{thm:Roots}, that there exist $\bar d,\bar d'\mid N$ such that
    $\eps_{i0}=\delta_{(\bar d\mid i)}$ and $\eps_{0j}=\delta_{(\bar d'\mid
    j)}$, and $\bar a_k=N/\bar d$ for $k$ being $N/\bar d$-multiple and $\bar
    a'_l=N/\bar d'$ for $l$ being $N/\bar d'$-multiple. Since
    $N/d=a_0=\sum_{j}\eps_{0j}=N/\bar d'$, we get $d=\bar d'$. Analogously, we
    get $\bar d=d'$. Since only the $N/d$-th rows have entries $1$ and
    $\eps_{i0}=\delta_{(\bar d\mid i)}$, we get $N/d\mid \bar d=d'$, hence
    $N\mid dd'$. 

    The case $N=dd'$, i.e. $d'=N/d$ corresponds to the
    non-shifted solution of \eqref{eq:eps-sum}, since in this case there
    maximal $N/d\cdot N/d'=N$ entries $1$. 
    
    We consider now a solution with $N<dd'$ and show, that a suitably shifted
    version of this is also a solution of \eqref{eq:eps-sum} with smaller
    $dd'$. The claim then follows by induction over $dd'$.  For a solution
    $\eps$ of \eqref{eq:eps-sum} the shifted
    matrix, defined by
    \begin{equation*}
      \eps_{ij}^{[t]} \df \begin{cases}
        \eps_{i,j-t\frac i{N/d}}, & N/d\mid i,\\
        \eps_{i,j}=0, & \text{otherwise}.
      \end{cases}
    \end{equation*}
    for some $0\leq t\leq d-1$, is also a solution of \eqref{eq:eps-sum}. This
    follows easily by inserting $\eps_{ij}^{[t]}$ in \eqref{eq:eps-sum}, since
    the shift gives only a new ordering of the summands. Let now $\eps$ be a
    solution with $dd'>N$. We now want to shift this $\eps$ in a way, that no
    $1$ entries in the first column are moved, i.e. the $d'$-th rows are 
    shifted by multiple of $d$, and some of the other rows are shifted, such
    that $\eps^{[t]}$ has at least one $1$-entry more in the first column,
    than $\eps$:

    Consider the $N/d$-row. By hypothesis, $N/d<d'$, hence
    $\eps_{N/d,0}=0$, but $\eps_{N/d,t}=1$ for some $t$, since the row sum
    $a_{N/d}=N/d>0$. Since the column sum $a'_t\neq 0$, we have $N/d'\mid t$,
    hence $d\mid t\cdot dd'/N$.
    Thus the shifted solution $\eps_{ij}^{[-t]}$ has in the $0$-column
    still $\eps_{i0}=1$ for $d'\mid i$ and it has now additionally
    $\eps_{N/d,0}=1$. The expression $dd'$ for $\eps^{[t]}$ has to be strictly
    smaller than $dd'$ for $\eps$, this reduces the claim by induction to the
    unshifted case $dd'=N$, which has been solved above.
  \end{proof}
 
\section {A system of equations and $R$-matrices of quantum groups}
  The following system of equations for an abelian group $G$ arises as a
  necessary condition on the the element $R_0\in\C[\Lambda\times \Lambda]$ in
  Lusztig's ansatz for $R$-matrices for a quantum group $U_q(\g)$ with
  coradical $\C[\Lambda]$. In this application, the abelian group $G$ will be
  the fundamental group of $\g$, and hence cyclic except for $\g=D_{2n}$. We
  will not discuss this matter further, but refer the reader to our respective
  paper \cite{LN14}. Note Remark \ref{rem:DL}.

  \begin{defi}\label{GroupEquations}
    For an abelian group $G$ we define a set of $2|G|^2+2$ quadratic equations
    in $|G|^2$ formal complex variables $g(x,y)$ indexed by $x,y\in
    G$:
    \begin{align}
      g(x,y) &= \sum_{y_1+y_2=y} g(x,y_1)g(x,y_2),\label{grpeq01}\\
      g(x,y) &= \sum_{x_1+x_2=x} g(x_1,y)g(x_2,y),\label{grpeq02}\\
      1 &= \sum_{y\in G} g(0,y),\label{grpeq03}\\
      1 &= \sum_{x\in G} g(x,0).\label{grpeq04}
    \end{align}
  \end{defi}
  As a side remark, note that these equations are a subset of the equations for
  a Hopf pairing $g:\C^G\otimes \C^G\to \C$, but it allows for significantly
  more solutions containing $0$'s, as the next theorem shows. The result of this
  article is in some sense, that $g$ is still a pairing on a pair of subgroups.
  \begin{thm}\label{solutionsgrpeq}
    Let $G$ be an abelian group and $H_1,H_2$ subgroups with equal cardinality 
    $|H_1|=|H_2|=d$ (not necessarily isomorphic!).
    Let $\omega\colon H_1\times H_2\to\C^{\times}$ be a pairing of groups.
    Here, the group $G$ is written additively and $\C^{\times}$
    multiplicatively, thus we have $\omega(x,y)^d = 1$ for all $x\in H_1,
    y\in H_2$. Then the assignment
    \begin{equation}\label{g-solution}
      g\colon G\times G\to \C, ~ (x,y) \mapsto \frac 1d 
      \, \omega(x,y)\delta_{(x\in H_1)}\delta_{(y\in H_2)}
    \end{equation}
    is a solution of the equations \eqref{grpeq01}-\eqref{grpeq04} for $G$.
  \end{thm}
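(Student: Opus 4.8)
The plan is to verify directly that the function $g$ of \eqref{g-solution} satisfies each of \eqref{grpeq01}--\eqref{grpeq04}; since the four equations come in two pairs interchanged by $x\leftrightarrow y$ and $H_1\leftrightarrow H_2$, it suffices to treat \eqref{grpeq01} and \eqref{grpeq03}. Before doing so I would record two elementary facts about the pairing $\omega$: first, bilinearity in the second argument gives $\omega(x,y_1+y_2)=\omega(x,y_1)\omega(x,y_2)$ for $x\in H_1$ and $y_1,y_2\in H_2$; second, taking $y_1=y_2=0$ and dividing (legitimate since values of $\omega$ lie in $\C^\times$) gives $\omega(0,y)=1$ for all $y\in H_2$, and symmetrically $\omega(x,0)=1$ for $x\in H_1$.

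For \eqref{grpeq01} I would distinguish cases by whether $x\in H_1$ and whether $y\in H_2$. If $x\notin H_1$, then $g(x,\cdot)\equiv 0$ and both sides vanish. If $x\in H_1$ but $y\notin H_2$, the left side is $0$, and on the right every summand $g(x,y_1)g(x,y_2)$ vanishes because $y_1,y_2\in H_2$ would force $y=y_1+y_2\in H_2$. In the remaining case $x\in H_1$, $y\in H_2$, the right side equals $\frac1{d^2}\sum_{y_1+y_2=y}\omega(x,y_1)\omega(x,y_2)\,\delta_{(y_1\in H_2)}\delta_{(y_2\in H_2)}$; by bilinearity each non-vanishing summand equals $\omega(x,y)$, and the decompositions $y=y_1+y_2$ with $y_1,y_2\in H_2$ are parametrised by $y_1\in H_2$ alone (then $y_2=y-y_1\in H_2$ automatically), so there are exactly $|H_2|=d$ of them. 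Hence the right side is $\frac1{d^2}\cdot d\cdot\omega(x,y)=\frac1d\,\omega(x,y)$, which is the left side. Equation \eqref{grpeq02} follows by the same computation with the two coordinates exchanged.

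For \eqref{grpeq03}, since $0\in H_1$ we have $g(0,y)=\frac1d\,\omega(0,y)\delta_{(y\in H_2)}=\frac1d\,\delta_{(y\in H_2)}$ using $\omega(0,y)=1$, so $\sum_{y\in G}g(0,y)=|H_2|/d=1$; equation \eqref{grpeq04} is the same with $H_1$ in place of $H_2$. The whole argument is bookkeeping, and the single point where something actually happens is the counting step in \eqref{grpeq01}: one must observe that bilinearity of $\omega$ renders each summand independent of the chosen decomposition, and that $H_2$ being a subgroup makes an element of $H_2$ decompose in exactly $d$ ways. I therefore anticipate no genuine obstacle beyond keeping the case distinctions straight.
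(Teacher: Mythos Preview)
Your proof is correct and follows essentially the same approach as the paper: a direct verification of \eqref{grpeq01} and \eqref{grpeq03} using bilinearity of $\omega$ and the count $|H_2|=d$, with \eqref{grpeq02} and \eqref{grpeq04} obtained by symmetry. The only cosmetic difference is that the paper carries the $\delta$-factors through a single computation rather than splitting into explicit cases, but the content is identical.
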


  \begin{proof}
    The claim follows by straightforward calculations:
    \begin{align*}
      \sum_{y_1+y_2=y}g(x,y_1)g(x,y_2)
      &=\left(\frac 1d \right)^2\sum_{y_1+y_2=y} 
        \omega(x,y_1)\omega(x,y_2) 
        \delta_{(x\in H_1)}\delta_{(y_1\in H_2)}\delta_{(y_2\in H_2)} \\
      &=\left(\frac 1d\right)^2 \sum_{y_1+y_2=y}
        \omega(x,y_1+y_2)
        \delta_{(x\in H_1)}\delta_{(y_1\in H_2)}\delta_{(y_2\in H_2)} \\
      &=\left(\frac 1d\right)^2 |H_2|\, \omega(x,y) \delta_{(x\in
      H_1)}\delta_{(y\in H_2)}  
      = g(x,y).\\
      \sum_{y\in G} g(0,y) 
      & = \frac 1d\sum_{y\in G} \omega(0,y)\delta_{(y\in H_2)}
      = \frac 1d \sum_{y\in H_2}1
      = 1.
    \end{align*}
  \end{proof}

  \begin{que}
    Are these all solutions of the equations
    \eqref{grpeq01}-\eqref{grpeq04}?
  \end{que}

  As an application of the theorems proved in this paper we will below
  positively  answer this question for a {\bf cyclic group} $G$. We would
  actually hope to  completely resolve the question with the combinatorial
  results of this  article.

  \begin{rem}\label{rem:DL}
    For the application in quantum groups,
    the only non-cyclic case of interest is $\Z_2\times \Z_2$ (the fundamental
    group of the Lie algebra $\g=D_{2n}$), which can be checked explicitly to
    hold as well. Most other Lie algebras have $G=\Z_1,\Z_2,\Z_3,\Z_4$ \\

    It is quite remarkable that the only highly nontrivial case solved with 
    this articles result is hence the Lie algebra $A_n$ with $G=\Z_{n+1}$,
    which depends highly on the prime divisors of $n+1$. This is due to the
    {\bf unusually large center} $\Z_{n+1}$ of the algebraic group $SL_{n+1}$,
    which makes it notoriously hard to deal with (e.g. in Deligne-Lusztig
    theory). We hope that the technical tools developed in this article might be
    useful in addressing such issues.
  \end{rem}

  \begin{expl}\label{cyclicSolutions}
    Let $G=\Z_N$ and consider for any divisor $d|N$ the unique
    subgroup $H=\frac Nd\Z_N\cong \Z_d$ of $G$ of order $d$. By Theorem
    \ref{solutionsgrpeq} we have for any pairing $\omega\colon H\times
    H\to \C^{\times}$ the function $g$ as in \eqref{g-solution}
    as a solution of the equations \eqref{grpeq01}-\eqref{grpeq04}.\\
    We give the solution explicitly: For $H=\car{h}$, $h\in \frac nd\Z_n$, we
    define a pairing $\omega\colon H\times H\to \C^{\times}$ by
    $\omega(h,h)=\xi$ with $\xi$ a $d$-th root of unity, not necessarily
    primitive.\\
    Thus the general solution ansatz in Lemma \ref{solutionsgrpeq}
    translates for cyclic groups $G$ to
    \begin{equation}
      g\colon G\times G,~ (x,y)\mapsto \frac 1d \, \xi^{\frac{xy}{(N/d)^2}}
      \delta_{(\frac Nd\mid x)}\delta_{(\frac Nd\mid y)}.
    \end{equation}
  \end{expl}

  \begin{thm}\label{allsolutionsgrpeq}
    For $G=\Z_N$ the solutions given in Lemma \ref{solutionsgrpeq} (and worked
    out in this case in example \ref{cyclicSolutions}), are
    in fact all solutions to the system of equations
    \eqref{grpeq01}-\eqref{grpeq04}.
  \end{thm}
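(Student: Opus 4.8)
The plan is to reduce the classification of solutions $g$ of \eqref{grpeq01}--\eqref{grpeq04} for $G=\Z_N$ to the already-classified Fourier pairs of $\{0,1\}$-matrices from Theorem \ref{thm:eps-Solutions}, and then read off the pairing $\omega$. First I would observe that equations \eqref{grpeq01} and \eqref{grpeq02} say exactly that, for each fixed $x$, the row vector $(g(x,y))_y$ is an idempotent of $\C[\Z_N]$ (after suitable normalization), and symmetrically for columns; equivalently, if we set $\eps_{ij} = N\cdot g(i,j)$, then equation \eqref{grpeq01} together with \eqref{grpeq02} forces $\eps$ to satisfy the two-variable convolution identity \eqref{eq:eps-sum}, so $\eps/N$ is an idempotent of $\C[\Z_N\times\Z_N]$. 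The only missing ingredient compared with Theorem \ref{thm:eps-Solutions} is that there $\eps$ was assumed to be a $\{0,1\}$-matrix, whereas here $g$ (hence $\eps$) is a priori an arbitrary complex matrix. So the first genuine step is to show that $|g(x,y)|\in\{0,1/d\}$ with a single common value $d$, i.e. that the \emph{support} matrix $\delta_{(g(x,y)\neq 0)}$ is itself a Fourier-type $\{0,1\}$-matrix.

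To get this, I would expand $\bar\eps_{kl} := \frac1N\sum_{i,j}\eps_{ij}\xi^{-(ik+jl)}$ (the dual transform, as in \eqref{eq:dualFourier}) and use that $\eps/N$ is a genuine idempotent, so $\bar\eps$ is a $\{0,1\}$-matrix — its entries are eigenvalue multiplicities of a projection, hence in $\{0,1\}$. This is the standard fact that an idempotent of $\C[\Z_N\times\Z_N]$ is a sum of distinct primitive idempotents. Now apply the row-sum/column-sum analysis of the proof of Theorem \ref{thm:eps-Solutions} verbatim: equation \eqref{grpeq03} gives $\sum_y g(0,y)=1$, i.e. $\sum_l \bar\eps_{0l}$-type relations, and one computes that $a_k = \sum_j \eps_{kj} = \sum_i \xi^{ik}\bar\eps_{i0}$ is a power sum of roots of unity; the constraints coming from \eqref{grpeq01}--\eqref{grpeq04} force these row sums to be non-negative integers, so Theorem \ref{thm:Roots} applies and $\{i\mid\bar\eps_{i0}=1\}$ is a subgroup. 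One then transfers this, exactly as in the proof of Theorem \ref{thm:eps-Solutions}, to conclude that the support of $\eps$ is supported on a product of cosets and has the shape $\eps_{ij}\neq 0 \iff \frac Nd\mid i$ and $d'\mid j - t\frac i{N/d}$, with $dd'\mid N$ forced down to $dd'=N$ by the same shift-and-induct argument; in particular every nonzero $g(x,y)$ has the same modulus $1/d$ where $d=|H_1|=|H_2|$ is the common order.

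With the support and moduli pinned down, what remains is to identify the phases. Write $g(x,y) = \frac1d\,\omega(x,y)$ on the support $H_1\times H_2$ (with $H_1 = \frac Nd\Z_N$ the subgroup of index $d/\gcd$ — more precisely the subgroup determined above, $H_2$ the corresponding one for columns, both of order $d$). Plugging into \eqref{grpeq01} and using that $|H_2|=d$ summands each contribute, the identity $\sum_{y_1+y_2=y}\omega(x,y_1)\omega(x,y_2)\cdot\frac1{d^2}\cdot d = \frac1d\omega(x,y)$ collapses, upon cancelling the $d$'s and the cardinality, to $\omega(x,y_1)\omega(x,y_2) = \omega(x,y_1+y_2)$ for all $x\in H_1$ and $y_1,y_2\in H_2$; symmetrically from \eqref{grpeq02} one gets additivity in the first slot. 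Hence $\omega$ is bimultiplicative, i.e. a pairing $H_1\times H_2\to\C^{\times}$, and since $H_2$ has exponent dividing $d$ we get $\omega(x,y)^d=1$. Finally \eqref{grpeq03} reads $\frac1d\sum_{y\in H_2}\omega(0,y)=1$, which is automatic since $\omega(0,\cdot)\equiv 1$ and $|H_2|=d$; this also rules out degenerate normalizations and confirms the constant is exactly $1/d$. This is precisely the solution of Theorem \ref{solutionsgrpeq} written out in Example \ref{cyclicSolutions}, so all solutions have that form.

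I expect the main obstacle to be the bookkeeping in the middle step — carefully verifying that the constraints from \eqref{grpeq01}--\eqref{grpeq04} really do force all the relevant row and column sums of the unnormalized matrix $\eps$ to be \emph{non-negative integers} rather than merely non-negative reals or integers, since only then can Theorem \ref{thm:Roots} be invoked. The integrality comes from $\bar\eps$ being $\{0,1\}$ (idempotent of a group algebra), and non-negativity comes from $a_k = \sum_i \xi^{ik}\bar\eps_{i0}$ being a sub-sum of the all-ones power sum which, combined with $\sum_k a_k = \sum_{i,j}\eps_{ij} = N$ and the convolution identity, pins the sign; but making this airtight, and correctly handling the interplay of the two transforms $\eps\leftrightarrow\bar\eps$ so that the subgroups $H_1,H_2$ emerge with matching order $d$, is the delicate part. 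Everything after that is the routine phase computation above, essentially a mild rearrangement of the proof of Theorem \ref{solutionsgrpeq} run backwards.
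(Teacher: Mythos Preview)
Your reduction has a genuine gap at the very first step. You set $\eps_{ij}=N\,g(i,j)$ and assert that \eqref{grpeq01} together with \eqref{grpeq02} forces $\eps$ to satisfy the two-variable convolution identity \eqref{eq:eps-sum}, so that $\eps/N$ is an idempotent of $\C[\Z_N\times\Z_N]$. This is false. Take $N=2$ and $g\equiv\tfrac12$ (the solution from Example~\ref{cyclicSolutions} with $d=2$ and trivial pairing). Then \eqref{grpeq01}--\eqref{grpeq04} all hold, but with $\eps\equiv 1$ the left side of \eqref{eq:eps-sum} at $(i,j)=(0,0)$ equals $\tfrac{1}{4}\cdot 4=1$ while the right side is $\tfrac12$. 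Equivalently, the full 2D transform $\bar\eps_{kl}=\sum_{i,j}g_{ij}\xi^{-(ik+jl)}$ gives $\bar\eps_{00}=2\notin\{0,1\}$. So your subsequent claim that ``$\eps/N$ is a genuine idempotent, so $\bar\eps$ is a $\{0,1\}$-matrix'' collapses, and with it the whole middle paragraph that tries to feed row and column sums into Theorem~\ref{thm:Roots}.

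The paper's route avoids this by choosing $\eps$ differently: not as $Ng$, but as the \emph{partial} Fourier transform of $g$ in the second variable only. Equation \eqref{grpeq01} says that for each fixed $i$ the element $\sum_j g_{ij}x^j$ is an idempotent of $\C[\Z_N]$, hence a sum $\sum_k\eps_{ik}e_k$ of primitive idempotents with $\eps_{ik}\in\{0,1\}$ automatically; this gives $g_{ij}=\tfrac1N\sum_k\eps_{ik}\xi^{jk}$. Substituting this expression into \eqref{grpeq02} and comparing Fourier coefficients in $j$ then yields precisely \eqref{eq:eps-sum} for this $\eps$. Now $\eps$ is a $\{0,1\}$-matrix \emph{and} $\eps/N$ is an idempotent of $\C[\Z_N\times\Z_N]$, so Theorem~\ref{thm:eps-Solutions} applies directly with no further work: one reads off $\eps_{ik}=\delta_{(N/d\,\mid\, i)}\delta_{(d\,\mid\, k-t\,i/(N/d))}$ and inverse-transforms in $k$ to recover $g$ in the form of Example~\ref{cyclicSolutions}. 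In particular, your entire second and third paragraphs (reproving the row/column-sum argument via Theorem~\ref{thm:Roots}, and then separately extracting the phases $\omega$ by a bimultiplicativity argument) become unnecessary---those steps are already packaged inside Theorem~\ref{thm:eps-Solutions} and the explicit inverse transform.
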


  \begin{proof}
    \begin{enumerate}[(a)]
      \item The proof is an application of Theorem \ref{thm:eps-Solutions}, which
      follows from the main Theorem \ref{thm:Roots}. Let
      $g:\Z_N\times \Z_N\to\C$ be
        a solution of the equations \eqref{grpeq01}-\eqref{grpeq04}. We write
        shortly $g_{ij}$ for $g(i,j)$, $0\leq i,j\leq N-1$.
        Let $\Z_N=\car{g}$, then 
        \begin{equation*}
          \sum_{j',j''} (g_{ij'}x^{j'}) (g_{ij''}x^{j''}) = \sum_j g_{ij} x^i
        \end{equation*}
        for all $i$ by \eqref{grpeq01}, hence $\sum_j g_{ij} x^i$ is an
        idempotent in $\C[\Z_N]$. Let $\xi=\xi_N$ be a primitive $N$-th root
        of unity, then primitive idempotents of $\C[\Z_N]$ are all of the form
        $e_k=\frac 1N\sum_{r=0}^{N-1} \xi^{kr}x^r$. Thus, we have $\sum_j
        g_{ij} x^i =\sum_k \eps_{ik}e_k$ for $\eps_{ik}\in\{0,1\}$ for all
        $i$, and therefore
        \begin{equation*}\label{eq:gij}
          g_{ij} = \frac 1N\sum_{k=1}^{N-1} \eps_{ik} \xi^{jk}
        \end{equation*}
        for $\{0,1\}$-matrix $\eps=(\eps_{ik})$. By inserting this in
        \eqref{grpeq02}, $\sum_{i'+i''=i}g_{i'j}g_{i''j} = g_{ij}$, we get
        \begin{equation*}
          \frac1{N^2} \sum_{i'+i''=i}\sum_{k',k''} \eps_{i'k'} \eps_{i''k''}
          \xi^{(k'+k'')j} 
          = \frac 1N \sum_{k} \eps_{ik} \xi^{kj}.
        \end{equation*}
        By comparing the coefficients on both sides we get
        \begin{equation}
          \frac 1{N^2} \sum_{i'+i''=i}\sum_{k'+k''=k} \eps_{i'k'}
          \eps_{i''k''} = \frac 1N \eps_{ik},
        \end{equation}
        which is equation \eqref{eq:eps-sum}. Thus, $\eps/N$ is an idempotent
        in $\C[\Z_N\times\Z_N]$ and $\eps$ is $\{0,1\}$-matrix and we can
      apply Theorem \ref{thm:eps-Solutions}. We have 
      \begin{equation*}
        \eps_{ij} = 
        \begin{cases}
          \delta_{(\frac Nd \mid i)} \delta_{(d\mid j-t\frac
          i{N/d})}, & \text{if } N/d\mid i, \\
          0, &\text{otherwise,}
        \end{cases}
      \end{equation*}
      for some $d\mid N$ and $0\leq t\leq d-1$. We insert in
      \eqref{eq:gij}:
      \begin{align*}
        g_{ij} 
        &= \frac 1N \sum_{k=0}^{N-1} \delta_{(\frac Nd \mid i)} 
           \delta_{(d\mid k-t\frac i{N/d})} \xi^{jk} \\
        &= \frac 1N \delta_{(\frac Nd \mid i)} \sum_{k'=0}^{N/d-1}
           \xi^{j(t\frac i{N/d} + dk')}
           \tag{$k=t\frac i{N/d} + dk',~k'=0,\ldots,d-1$} \\
        &= \frac 1d \left(\xi^{N/d}\right)^{t\frac i{N/d} \frac j{N/d}}
           \delta_{(\frac Nd \mid i)} 
           \underset{=\delta_{\frac Nd\mid j}}
           {\underbrace{\frac{1}{N/d} \sum_{k'=0}^{N/d-1} (\xi^d)^{jk'}}}
      \end{align*}
      Thus, $g$ is the solution given already in Example
      \ref{cyclicSolutions}, which was the explicitly worked out case of
      Lemma \ref{solutionsgrpeq} for $G$ cyclic.
    \end{enumerate}
  \end{proof}



\begin{thebibliography}{12345}
  \bibitem[Bou66]{Bou66} N. Bourbaki, \emph{General topology, part 1},
    Hermann, Paris and Addison-Wesley (1966).
  \bibitem[LN14]{LN14} S. Lentner, D. Nett: \emph{New $R$-matrices for small
    quantum groups}, Preprint,
    \href{http://arxiv.org/abs/1409.5824}{\texttt{arXiv:1409.5824}} (2014).
  \bibitem[Lus93]{Lus93} G. Lusztig, \emph{Introduction to quantum
    groups}, Birkh\"auser (1993).
\end{thebibliography}
\end{document}